\newcommand{\ver}{{\it 1c}}
\newcommand{\X}{{\mathbf X_{\Gamma}}}
\newcommand{\R}{{\mathbb R}}
\newcommand{\C}{{\mathbb C}}
\newcommand{\Z}{{\mathbb Z}}
\renewcommand{\H}{{\mathbf H}}
\newcommand{\half}{{\frac{1}{2}}}
\renewcommand{\phi}{\varphi}
\renewcommand{\epsilon}{\varepsilon}
\newcommand{\kahler}{K\"ahler }
\renewcommand{\phi}{\varphi}
\newcommand{\ccal}{\mathcal{C}}
\newcommand{\gcal}{\mathcal{G}}
\newcommand{\mcal}{\mathcal{M}}
\newcommand{\ocal}{\mathcal{O}}
\newcommand{\rcal}{\mathcal{R}}
\newcommand{\scal}{\mathcal{S}}
\newtheorem{theo}{{\sc Theorem}}[section]
\newtheorem{cor}[theo]{{\sc Corollary}}
\newtheorem{lem}[theo]{{\sc Lemma}}
\newtheorem{prop}[theo]{{\sc Proposition}}
\newenvironment{rem}{\medskip\noindent{\it Remark:\/} }{\medskip}
\newenvironment{defin}{\medskip\noindent{\it Definition:\/} }{\medskip}
\newtheorem{mainprop}{{\sc Proposition}}
\newtheorem{maincor}{{\sc Corollary}}
\title[Matrix elements of Fourier integral operators\ver] {Matrix elements of Fourier Integral Operators\\
}
\author{Steve Zelditch}
\address{Department of Mathematics, Northwestern  University,
Evanston, IL 60208-2370, USA} \email{
zelditch@math.northwestern.edu}
\thanks{Research partially supported by NSF grant  \# DMS-1206527.}
\begin{document}


 \maketitle

\begin{abstract}  This article is concerned with the semi-classical limits of matrix
elements $\langle F \phi_j, \phi_j \rangle$ of eigenfunctions of the Laplacian $\Delta_g$
of a compact Riemannian manifold $(M, g)$  with respect to a Fourier
integral operator $F$ on $L^2(M)$. More generally, we consider matrix elements of eigensections of quantum maps. 
Many results exist for the case where $F$ is a pseudo-differential operator, but matrix elements of
Fourier integral operators involve new considerations. The limits reflect the extent to which
the canonical relation of $F$ is invariant under the geodesic flow of $(M, g)$. When the canonical relation
is almost nowhere invariant, a density one subsequence of the  matrix elements tends to zero (related results
arose first in the study of quantum ergodic restriction theorems).  The limit states are invariant measures
on the canonical relation of $F$ and their invariance properties are explained. The invariance properties
in the case of Hecke operators answers an old question raised by the author in \cite{Z1}.

\end{abstract}

One of the main objects of study  in quantum ergodicity  is  the
sequence  of diagonal  matrix elements \begin{equation} 
\label{rho} \rho_{j} (A) = \langle
 A
 \phi_j, \phi_j \rangle \end{equation} of  zeroth order pseudodifferential
 operators $A \in \Psi^0(M)$  relative to an orthonormal basis $\{\phi_j\}$ of eigenfunctions
 $$\Delta \phi_j = \lambda_j^2 \; \phi_j,\;\;\; \langle \phi_j,
 \phi_k \rangle = 0. $$
of the Laplacian $\Delta$ of a compact Riemannian manifold $(M,
 g)$.
The diagonal matrix element \eqref{rho} define positive linear functionals  of mass one,
\begin{equation} \rho_j: \overline{\Psi}^0 \to \R, \;\;\;\; \rho(I) = 1, \end{equation}
on the norm closure of the space  $ \Psi^0(M)$ of zeroth order pseudo-differential operators, and  are invariant
under the wave group in the sense that
\begin{equation} \label{WG}  \rho_j (U^{t *} A U^t) = \rho_j. \end{equation}
The well-known consequence of Egorov's theorem is  that any weak* limit  $\mu$ of the sequence $\{\rho_j\}$ lies in the space
$\mcal_I$ of
 invariant probability measure for the geodesic flow $G^t$ on
 $S^*M$, i.e. is a positive linear functional on $C(S^*M)$ with $\mu(1) = 1$ and $G^t_* \mu = \mu$. Moreover,
one has the local Weyl law
\begin{equation} \label{WLW} \lim_{\lambda \to \infty} \frac{1}{N(\lambda)} \sum_{j: \lambda_j \leq \lambda} \rho_j = \omega_L, \end{equation}
where $\omega_L(A) = \int_{S^* M} \sigma_A d\mu_L$ is the Liouville state of integration of the  principal
symbol of $A$ with respect to normalized Liouville measure. Also, $N(\lambda) = \# \{j : \lambda_j \leq \lambda\}$
is the Weyl counting function and convergence is in the sense of continuous linear functionals on $\Psi_0$. We
refer to \cite{Z3,Z4, Zw} for background on these statements. 
The off-diagonal matrix elements are also important and we refer to \cite{Z5} for results on them.

The purpose of the present note is to consider the analogues of these basic 
results for diagonal  (and to a lesser extent,  off-diagonal)  matrix elements \eqref{rho} of   Fourier integral operators $F$ associated to a closed
canonical relation  $C \subset T^* M \times T^* M$. That is, we consider the functionals
\begin{equation} \label{rhojF} \rho_j: I^r(M \times M, C)  \to \C, \;\;\; \rho_j(F) = \langle F \phi_j, \phi_j \rangle,
\end{equation}
where $ I^r(M \times M, C) $ is the space of Fourier integral operators of order $r$  with wave front
relation along $C$ (see Vol. 4 of \cite{HoI-IV}   for background and notation).  
The linear
functionals  $\rho_j$ \eqref{rhojF}  are   invariant under the wave group in the
sense that if $U^t = e^{i t \sqrt{\Delta}}$ is the wave group of
$(M, g)$, then
\begin{equation} \label{QINV} \rho_j^t(F) : = \rho_j(U^{- t} F U^t) = \rho_j(F).  \end{equation}   In particular, we are interested in the weak*
limits of the sequence $\{\rho_j\}$. 

\begin{defin} We define a weak* limit  $\rho_{\infty} $ of the functionals $\{\rho_j\}$
to be a functional on $I^0(M \times M, C)$ so that
$\rho_j(F) \to \rho_{\infty}(F)$ for all $F$ in this class. 

 \end{defin} 
It is shown in Proposition \ref{w*L} that $\rho_{\infty}(F)$ depends only on the principal symbol of $F$, i.e.
on the half-density symbol on the associated canonical relation and defines a measure on the symbols. We
therefore use a convenient abuse of notation and identity the state and the measure, i.e. we put
\begin{equation} \label{rhoinf} \rho_{\infty}(F) = \rho_{\infty}(\sigma_F). \end{equation} 

 The motivation for this problem comes from several sources, for instance:

\begin{itemize}

\item (i)  The question of weak * limits for matrix elements of Fourier integral operators arose in recent work on quantum ergodic restriction theorems
\cite{TZ,TZ2,DZ}, and more recently for ray-splitting in \cite{JSS}.  For the $F$ in those articles, the underlying canonical relation is a local canonical graph. 
 A key point was that $\rho_j(F) \to 0$ along a subsequence
of density one when the local canonical graphs are  `almost nowhere invariant' under the geodesic
flow. 
 We say that $F \phi_j$ is almost orthogonal to $\phi_j$. One aim of this note is to understand such almost
orthogonality  more systematically.

\item  (ii) Hecke operators $T_p$
are Fourier integral operators associated to local canonical graphs, namely $C$ is the lift to $G/\Gamma$ of the Hecke correspondence \cite{E}.   All work in 
arithmetic quantum chaos concerns joint eigenfunctions of $\Delta$ and of the Hecke operators $T_p$. 
An obvious question is the Hecke correspondence  invariance properties of the weak * limits of Hecke $\rho_j$.  This
question was raised but not settled in \cite{Z1} and an answer will be given in Proposition \ref{HECKESN}
for Hecke operators on spheres and in Proposition \ref{HECKEHYP} for arithmetic hyperbolic quotients . Of course, Lindenstrauss
\cite{L} has long since proved that $\rho_j \to \omega_L$,  but the result we present appears to be new.

\item (iii)  A  weak* limit problem  where the canonical relation is not a local canonical graph arose in \cite{Z10}. 
To study nodal sets, the $\Delta$-eigenfunctions    of a real analytic Riemannian
manifold $(M, g)$ were analytically continued $\phi_j \to \phi_j^{\C}$  to its Grauert tubes $M_{\tau}$ and then restricted to geodesic arcs $\gamma: \R \to \partial M_{\epsilon}$. The pullbacks $\gamma^* |\phi_j^{\C}|^2$ can be normalized
to form a bounded sequence of measures on compact intervals of $\R$. All of their weak * limits are constant multiples
of Lebesgue measure. The constants  depend on whether the geodesic is closed or not. Related pointwise  Weyl laws for 
$|\phi_j^{\C}(\zeta)|^2$ on all of $\partial M_{\epsilon}$  have  been proved \cite{Z8}.

\item (iv)  Pointwise phase space Weyl laws for matrix elements of coherent state projectors
$F_{\hbar} = \psi_{x, \xi}^{\hbar_j} \otimes \psi_{x, \xi}^{\hbar_j *}$ 
were obtained in   \cite{PU}. They are somewhat similar to modulus squares 
$|\phi_j^{\C}(\zeta)|^2$ but involve a different FBI transform.   It does not seem that the weak* limit problem was studied 
explicitly before, but the results are rather similar to the restrictions  $\gamma^* |\phi_j^{\C}|^2$. 

\item (v) Both of the above problems are special cases of weak* limit problems for $\rho_j$ on algebras of
Toeplitz operators associated to invariant symplectic cones $\Sigma$  under the geodesic flow. The results
in this setting are parallel to  the case of $\rho_j$ as states
on the algebra $\Psi^0(M)$ in the sense of \cite{Z4}. Different algebras of Fourier integral operators
associated to idempotent canonical relations were introduced in \cite{GuSt}. In a special case, the the weak* limit problem was
studied in \cite{Z9}  (see also \cite{ST}).


\end{itemize}

There is a long-standing question as to the uniquess of weak* limits \eqref{1} when the geodesic flow is
sufficiently chaotic. The larger the class of  `test' operators one can use, the more control one has over the limits.  In general one would
like to study the most general possible microlocal defect measures. We refer to  \cite{Zw} for general background.

In this article, we concentrate on the case where  $C$ is a local canonical graph and the order  $r = 0$, and
only briefly summarize results on weak * limits of matrix elements in the other cases above. 
In the canonical graph  case,  the  family  $\{\rho_j\}$ 
of functionals on $I^0(M \times M, C)$ is uniformly bounded and all  weak* limits are complex measures
on $C$.   More precisely, they are linear functionals of  the symbol $\sigma_F$
of $F$, which is a $\half$-density 
along $C$ (times a Maslov factor, which will be ignored here for simplicity of exposition and because
the results do not depend on the Maslov factor).  The local Weyl law for  Fourier integral operators 
associated to local canonical graphs was studied  in \cite{Z2} (see also \cite{TZ2, JSS}).

Many of the results for local canonical graphs turn out to be  negative: the weak * limits of the diagonal
functionals $\rho_j$  are very often zero, since most canonical graphs $C$ are almost nowhere invariant
under a given geodesic flow.  The graph in the Hecke case is invariant \S \ref{HECKE}, and so the
question becomes one of determining when the limits are trivial and when they are not. The local canonical
(or isotropic) relation in the case of $\gamma^* |\phi_j^{\C}(z)|^2$ or for $|\langle \phi_j, \psi_{x, \xi}^{\hbar}
\rangle|^2$ are not invariant but can be time averaged to become invariant, and the weak * limits (when
suitably normalized) are often non-trivial. 

The results
also suggest that off-diagonal elements
\begin{equation} \label{rhoij} \rho_{ij}(F)  = \langle F \phi_i, \phi_j \rangle \end{equation}  are often
more natural when testing against  Fourier integral operators. They satisfy
\begin{equation} \label{QINVoff} \rho_{ij}^t(F) : = \rho_{ij} (U^{- t} F U^t) = e^{i t (\lambda_i - \lambda_j)}  \rho_j(F), \end{equation} 
and intuitively correspond to canonical transformations which change the energy level. However, the weak*
limits are again  trivial when the graph is almost nowhere invariant.



\subsection{Results for local canonical graphs}

To state our results, we need to introduce some further notation. A Fourier integral operator is an operator $F$  whose
 Schwartz kernel  may be locally represented as a finite
sum of oscillatory integrals,
\begin{equation} \label{K} K_F(x,y) \sim \int_{\R^N} e^{i \phi(x, y, \theta)} a(x, y,
\theta) d\theta \end{equation}  for some homogeneous phase $\phi$ and amplitude
$a$. It is well known that $F$ is determined up to compact
operators by the  canonical relation
$$C = \{(x, \phi'_x, y, - \phi'_y):  \phi'_{\xi}(x, y, \xi) =
0\} \subset T^*M \times T^*M, $$ together with  the principal
$\sigma_F$ of $F$, a $1/2$-density along $C$.We denote by $I^0(M, \times
M, C)$ the class of Fourier integral operators of order zero and
canonical relation $C$.  We refer to \cite{HoI-IV} for the background. Then we may regard (\ref{rho}) as defining
continuous linear functionals on $I^0(M \times M, C)$ with respect
to the operator norm. We  recall that $C$ is a local canonical graph when both projections in the diagram \begin{equation}\label{DIAGRAMintro} \begin{array}{ccccc} & & C \subset T^*M \times T^* M & &  \\ & & & & \\
& \pi_X \swarrow & & \searrow \pi_Y & \\ & & & & \\
T^* M & & \iff & & T^* M
\end{array}
\end{equation}
are  (possibly branched) covering maps. 
 If we equip $C$ with the symplectic volume measure pulled back by $\pi_X$ from $T^* M$, then we may
consider symbols $\sigma_F$ as functions on $C$. Some well-known examples are:

\begin{itemize}

\item $F = T_g$ is translation by an isometry of a Riemannian
manifold $(M, g)$ possessing an isometry.

\item $F$ is a Hecke operator $T f(x) = \sum_{j = 1}^k  (f(g_j x) +
f(g_j^{-1} x) )$ on $S^n$ or on an arithmetic hyperbolic manifold corresponding to a finite set $\{g_1,
\dots, g_k\}$ of isometries of the universal cover. In this case $C$ is the graph of the cotangent lift of the Hecke
correspondence and is a local canonical graph  \cite{RS,LPS}.

\item $F_t = U^t = e^{i t \sqrt{\Delta}}$ or its self-adjoint part $ \cos t \sqrt{\Delta}$.

\item $F = W^* W$ where $W f = \gamma_H B U^t$ where $\gamma_H$ is restriction to a hypersurface
$H \subset M$ and $WF'(B)$ is disjoint from the cotangent directions to $H$ \cite{Ta, GS, TZ,TZ2,DZ} among many
articles.

\item $F$ is a semi-classical quantum map in the setting of positive Hermitian holomorphic line 
bundles over K\"ahler manifolds \cite{Z6}.

\end{itemize}





The first result is:

\begin{mainprop} \label{w*L}  If $C$ is a local canonical graph and $F \in I^0(M \times M, C)$ then the weak limits $\rho_{\infty}$ of $\langle F \phi_j, \phi_j \rangle$ are measures on $SC := C \cap S^* M \times S^* M$, i.e. 
$|\rho_{\infty}(F)| \leq C  \sup_{S C}||\sigma_F||_{C^0} $. 
\end{mainprop}
As discussed above \eqref{rhoinf} we also write the limit functional as functional on the symbol, and
thus have $|\rho_{\infty}(\sigma_F)| \leq C  \sup_{S C}||\sigma_F||_{C^0} $. 

The proof is quite similar to that for $A \in \Psi^0(M)$. But it can be useful to interpret the quantum limits
as living on $SC$ rather than on $S^* M$, as will be seen in the case of Hecke operators.

The weak* limit problem for $\{\rho_j\}$ on $I^0(M \times M, C)$ is not so different from that of $\Psi^0(M)$ since
there often exists an elliptic element $F_0$ of $I^0(M \times M, C)$, i.e. one with nowhere vanishing symbol, and then
all of the elements have the form $A F_0$ or $F_0 B$ where $A, B \in \Psi^0(M)$. 
But the canonical relation
of $U^{- t} F U^t$  equals the image
$$C_t := (G^{-t} \times G^t)  (C) $$ of $C$ under the map $G^{-t} \times G^t$ of $T^* M \times T^* M$, and only coincides with the canonical relation $C$
of $F$ if $C$ is $G^t$-invariant.  In general, 
$$U^{- t} F U^t \in I^0(M \times M, C_t), $$
so that by \eqref{QINV} $\rho_j^t$  induces  a functional on $I^0(M \times M, C_t)$.

The following initial result shows that the invariance properties
of quantum limits depend  on whether the canonical
relation is invariant under the geodesic flow. 

\begin{mainprop} \label{1} Let $\rho_{\infty}^t$ be a weak* limit of the functionals $\rho_j^t$
on $I^0(M \times M, C)$. Then there exists a family of measures  $\mu_t$ on $C_t$  such that
$$\rho^t_{\infty} (A) =  \int_{C_t} (G^{t}  \times G^{-t})^* \sigma_A  \;\;\;d\mu_t,  \;\;\;\; A \in I^0(M \times M, C)$$
with
$\mu_t =   \mu$ on $C_t \cap C$.

\end{mainprop}

Thus,

\begin{maincor} \label{2}Let $\rho_{\infty}$ be a limit of the sequence of functionals $\rho_j(F) = \langle F \phi_j, \phi_j \rangle$
on  $I^0(M, \times M, C)$. Suppose that the canonical relation $C$
is invariant under the geodesic flow $G^{-t} \times G^t$. Then  $\rho_{\infty}$ is a  $G^t$-invariant signed  measure of mass
$\leq 1$ on $C$.
\end{maincor}



Of course, the quantum invariance \eqref{QINV} implies that
$$  \int_{C_t} (G^{t}  \times G^{-t})^* \sigma_A  \;\;\;d\mu_t =   \int_{C}  \sigma_A  \;\;\;d\mu. $$
But Proposition \ref{1}  does not give  any non-trivial invariance conditions on    the set where canonical relation $C$
is nowhere invariant under the geodesic flow $G^{-t} \times G^t$. 
The next result identifies the limit measure as zero along a subsequence of density one. 
We refer to this as the  `almost-orthogonality' of $F \phi_j$ and $\phi_j$.  In the following,
 $n = \dim M$ so that $ 2n = \dim C$.

\begin{mainprop} \label{ANCsympcor} Let $F \in I^0(M \times M, C)$ and  assume that for $t > 0$, the
set $\ccal_{t} = C_t \cap C$ has
Minkowski  $2n$- measure zero.  Then there is a density one subsequence of
eigenfunctions so that $\langle F \phi_{\lambda_j},
\phi_{\lambda_j} \rangle \to 0$. \end{mainprop}


A special case of this almost orthogonality result was one of the the main ingredients in the proof in \cite{TZ,TZ2,DZ} of the quantum
ergodic restriction theorem along hypersurfaces. See Theorem 10 of \cite{TZ} or \S 8 of \cite{TZ2}.
Although this statement is reminiscent of quantum ergodicity,  it does not use 
any dynamical properties such as  ergodicity of $G^t$. As Proposition \ref{ANCsympcor}  indicates,
 almost-orthogonality can sometimes 
be understood in terms of localization on energy surfaces of eigenfunctions.  The nowhere commuting condition in that
result
implies that $F \phi_{\lambda}$ localizes on a disjoint set from $\phi_j$  and
thus the two states are almost orthogonal. In this example there are no sparse exceptional
subsequences of eigenfunctions.   See also Lemma \ref{chigtNC} of
\S \ref{Almostdisjoint}. But this is not always the case, for instance localization does not seem to play a 
role in the  K\"ahler analogue of Theorem \ref{KAHLER}.

\subsection{Reality} 

Additional invariance properties arise when $F$ is self-adjoint or real due to the fact that the eigenfunctions
are real valued. 
We say that $F$ is real if $c F = F c$ where $c$ denotes complex conjugation.  We note that
$$\langle F^* \phi_j, \phi_j \rangle = \overline{\langle F \phi_j,
\phi_j \rangle} = \langle \overline{F}^* \phi_j, \phi_j \rangle =
\langle F^t \phi_j, \phi_j \rangle.
$$

We recall that
the transpose of a Lagrangian manifold $\Lambda$ is defined by
$$\Lambda^t = \{(y, \eta, x, \xi): (x, \xi, y, \eta) \in \Lambda\},
$$ i.e. it is the image of $\Lambda$ under the involution,
$$\iota(x, \xi, y, \eta) = (y, \eta, x, \xi). $$
The `conjugate' Lagrangian is defined by
$$\Lambda^* =  \{(y, \eta, x, - \xi): (x, \xi, y, \eta) \in \Lambda\},
$$
i.e. it is the image under $c \circ \iota$ where $c$ is the
conjugation involution $(x, \xi) \to (x, - \xi)$ in the second
variable; the notation is consistent
 since complex conjugation is the quantization of the map $c$. We say that a canonical relation $C$ is symmetric if $C^t
= C$ and self-adjoint if $C^* = C$. 
A self-adjoint Fourier integral operator always has a self-adjoint
canonical relation and a real Fourier integral operator has a
 canonical relation invariant under $c$.  
When $F$ is self-adjoint we obtain an addtional  invariance principle if we consider symbols defined by 
the functions $\pi_X^* a, \pi_Y^* a$ (cf. \eqref{DIAGRAMintro}):
\begin{mainprop}\label{SA}  If $F$ is real and self-adjoint  then 
$(\pi_X)_* \mu  = (\pi_Y)_* \mu$ for any quantum limit measure $\mu$ on $C$  of Proposition \ref{1}.

\end{mainprop}

This additional principle is useful in obtaining relations between limit measures of Hecke operators.
See \S \ref{HECKE}.

\subsection{Quantum maps in the K\"ahler setting}

There is a natural analogue of Proposition \ref{ANCsympcor} in the  \kahler setting.  We let $(M, \omega, L)$
be a compact polarized K\"ahler manifold. That is, $L \to M$ is a holomorphic line bundle equipped with a
Hermitian metric $h$ whose curvature form $\Theta_h$ equals $i \omega$. Thus, $\omega \in H^{1,1}(M, 2 \pi \Z)$.
We also denote the kth tensor power of $L$ by $L^k$ and denote the space of holomorphic sections by
$H^0(M, L^k)$; we also put $d_k = \dim H^0(M, L^k)$.
We refer to \cite{Z6, Z7} for  background.

We further $\chi_1,
\chi_2$ be two quantizable symplectic diffeomorphisms of  $(M, \omega)$. The definition of quantizable is
from \cite{Z6,Z7} , which generalizations the implicit standard notion for special cases such as cat maps on a complex one dimensional torus (elliptic
curve). Namely, $\chi_j$ are symplectic diffeomorphisms which possess lifts as contact transformations
of the unit circle bundle $X_h = \partial D^*_h$ where $D^*_h $ is   the unit co-disc bundle in the dual line
bundle $L^*$ of $L$ with respect to the dual metric $h$. An exposition of the key notions K\"ahler quantization
can be found \cite{Gu}.

We  let $\{U_{\chi_1,k} \}_{k = 1}^{\infty}$ denote the semi-classical quantization of
$\chi_1$ as a sequence of  unitary operators on the Hilbert spaces $H^0(M, L^k)$.  Thus, $F = U_{\chi_2,k}$
denotes the quantum map quantizing $\chi_2$. As discussed in \cite{Z6,Z7} the 
quantizations have the form $U_{\chi, k}  :=\Pi_{h^k} \sigma_{k, \chi}  T_{\chi} \Pi_{h^k}$ where
$\Pi_{h^k}: L^2(M, L^l) \to H^0(M, L^k)$ is the orthogonal projection (Szeg\"o kernel),
where $T_{\chi}$ is the translation operator by $\chi$ and where $\sigma_{k, \chi}$ is a symbol
designed to make $U_{\chi, k}$ unitary.  More precisely, $T_{\chi}$ is the translation
operator by the lift of $\chi$ to $X_h$.

\begin{mainprop} \label{KAHLERintro} Let $\phi_{k, j}$ denote the eigensections of
$U_{\chi_1, k}$.  Suppose that $\chi_1, \chi_2$ almost nowhere commute in the
sense that the set  $\{z \in M: \chi_1 \chi_2(z) = \chi_2 \chi_1(z)\}$ has
measure zero.  Then $$\frac{1}{d_k} \sum_{j = 1}^{d_k} |\langle
U_{\chi_2, k} \phi_{k, j}, \phi_{k, j} \rangle|^2 \to 0. $$  \end{mainprop}

As a simple example, suppose that $A_1, A_2$ are two non-commutating elements of the
$\theta$ subgroup of $SL(2, \Z)$ (i.e. are congruent to the identity modulo 2). The associated symplectic maps
$ \chi_1, \chi_2 $  of $\R^2/\Z^2$ are then quantizable and  almost nowhere commute. So the eigenfunctions of one `quantum cat map' give rise to zero
quantum limits for the other. 

An interesting comparison to Proposition \ref{ANCsympcor}   is that the
semi-classical  eigensections $\phi_{N, j}$ do not appear to have
any localization properties which account for the almost
orthogonality of the matrix elements.

\subsection{Pointwise squares as matrix elements}


Let $\{\psi_{\hbar}\}$ be a semi-classical Lagrangian state, for instance a coherent state \cite{CR} in the Schr\"odinger
representation, or coherent states induced by a Bergman reproducing kernel, or the sequence of Gaussian beams
associated to a closed geodesic  \cite{R,BB}. In each case, we consider
the  norm-squares as matrix elements of semi-classical
Fourier integral operators,
\begin{equation} \label{NORMSQUARE} |\langle \psi_{\hbar}, \phi_j \rangle |^2 = \langle F_{\hbar} \phi_j, \phi_j \rangle, 
\;\;\; \mbox{where}\;\;  F_{\hbar}= \psi_{\hbar} \otimes \psi_{\hbar}^*. \end{equation}
The Schwartz kernel of $F_{\hbar}(x, y) = \psi_{\hbar} (x) \overline{\psi_{\hbar} (y)} $ inherits an oscillatory integral
representation from that of $\psi_{\hbar}$. 
One  relevant normalization is to take $||\psi_{\hbar}||_{L^2} = 1$.

The underlying Lagrangian or isotropic submanifold of $\{\psi_{\hbar}\}$ may or may not be invariant under the geodesic flow.
For instance, they are not invariant  for coherent states $\psi_{x, \xi}^{\hbar}$ (where the isotropic submanifold is a point), but they are   for the sequence of highest weight spherical harmonics $Y^k_k$ on the standard $S^2$ or for more general
Gaussian beams. However by an averaging argument (see \S \ref{MODSQ}) one can show that the weak* limits must be non-negative 
measures on the `orbit' of the underlying Lagrangian or isotropic submanifold under the geodesic flow. In the case
of a local canonical graph this `flowout' could be dense in $T^* M \times T^* M$ if $C$ is almost nowhere invariant
under $G^t \times G^t$ but for an isotropic submanifold the flowout can be a closed submanifold and one can
have non-trivial limits.

For the sake of brevity, we only  give explain how the  recent results in \cite{Z10} fit into the picture of
matrix elements of Fourier integral operators, which was not the approach used in that article. There are many
related examples that we will not consider here.

\subsection{\label{ALGEBRA} Idempotent canonical relations and algebras of Fourier integral operators}

There are other  settings where the  weak* limit problem is of interest. One is where
$I^0(M \times M, C)$ is a  $*$ algebra of Fourier integral operators. This occurs when
 the canonnical relation is idempotent in the sense that
$C^* = C = C^2. $ One such situation is the algebra of Fourier integral operators associated
to a symplectic cone $\Sigma \subset T^*X$. In fact $C$ need not be a Lagrangian submanifold.
It is sufficient that $C$ be isotropic (see \cite{W}). An example is when $\gamma$ is a closed
geodesic of a Riemannian manifold and where $\Sigma = \R_+ \dot{\gamma}$.  Then
$C$ is the diagional of $\Sigma \times \Sigma$.  The averaged
coherent state projections 
\begin{equation}\label{TAintro}  \langle \psi_{x, \xi}^{\hbar} \otimes (\psi_{x, \xi}^{\hbar})^* \rangle_L : =
\frac{1}{L} \int_{0}^L U^{t}  \left(\psi_{x, \xi}^{\hbar} \otimes (\psi_{x, \xi}^{\hbar}) \right)^* U^{-t} dt \end{equation}  are Toeplitz
operators in the case $\Sigma = \R_+ \dot{\gamma}$.

 Idempotent
canonical relations also  arise as leaf equivalence relations of null foliations of co-isotropic submanifolds
$\Sigma \subset T^* M$, also known as  
flowouts. 
In the case where the null-foliation is a fiber bundle with compact fiber over a leaf space
$S$ (a symplectic manifold), the algebra was denoted $\rcal_{\Sigma}$ and was studied
in \cite{GuSt}.  For the sake of brevity we omit further discussion and refer to   \cite{Z9} and to  \cite{ST,GU} for the study of quantum ergodicity in this setting.


\section{Background}

We recall that a Fourier integral operator $A: C^{\infty}(X) \to
C^{\infty}(Y)$ is an operator whose Schwartz kernel may be
represented by an oscillatory integral
$$K_A(x,y) = \int_{\R^N} e^{i \phi(x, y, \theta)} a(x, y, \theta) d\theta$$
where the phase $\phi$ is homogeneous of degree one in $\theta$.
The critical set of the phase is given by
$$C_{\phi} = \{(x, y, \theta): d_{\theta} \phi = 0\}. $$
When the map
$$\iota_{\phi} : C_{\phi} \to T^*(X, Y), \;\;\; \iota_{\phi}(x, y,
\theta) = (x, d_x \phi, y, - d_y \phi) $$ is an
 immersion the phase is called
non-degenerate.  Less restrictive is where the phase is clean, i.e.
$\iota_{\phi} : C_{\phi} \to \Lambda_{\phi} $, where
$\Lambda_{\phi} $ is the image of $\iota_{\phi}$,  is locally a
fibration with fibers of dimension $e$.  From \cite{HoI-IV}
Definition 21.2.5, the number of linearly independent
differentials $d \frac{\partial \phi}{\partial \theta}$ at a point
of $C_{\phi}$ is $N - e$ where $e$ is the excess.

We work in the polyhomogeneous framework of \cite{HoI-IV}, and asume that  classical poly-homogeneous symbols 
 $$ a(x, y, \theta) \sim \sum_{k=0}^{\infty} a_{-k}(x, y, \theta), \,\,(a_{-k} \; \mbox{ positive homogeneous of order
\; -k in }\;\theta.$$
 All of the results and notions of this note geeneralize to 
 semi-classical Fourier integral operators with semi-classical  symbols  $a \in S^{0,0}(T^*H \times (0,h_0]$
of the form
 $$ a_{\hbar} (s,\sigma) \sim \sum_{k=0}^{\infty} \hbar^k \;a_{-k}(s,\sigma), \,\,(a_{-k} \; \in  S_{1,0}^{-k}(T^* H)). $$ 
Since there is no essential difference in the weak* limit results in the two settings, we only consider the poly-homogeneous
one.

We a recall that the order of $F: L^2(X) \to L^2(Y)$ in the
non-degenerate case  is given in terms of a local oscillatory
integral formula by $m +
 \frac{N}{2} - \frac{n}{4},$, where $n = \dim X + \dim Y, $ where
$m$ is the order of the amplitude, and $N$ is the number of phase
variables in the local Fourier integral representation (see
\cite{HoI-IV}, Proposition 25.1.5); in the general clean case with
excess $e$, the order goes up by $\frac{e}{2}$ (\cite{HoI-IV},
Proposition 25.1.5'). Further, under clean composition of
operators of orders $m_1, m_2$, the order of the composition is
$m_1 + m_2 - \frac{e}{2}$ where $e$ is the so-called excess (the
fiber dimension of the composition); see \cite{HoI-IV}, Theorem
25.2.2.

The symbol $\sigma(\nu)$ of a Lagrangian (Fourier integral)
distributions is a section of the bundle $\Omega_{\half} \otimes
\mcal_{\half}$ of the bundle of half-densities (tensor the Maslov
line bundle). In terms of a Fourier integral representation it is
the square root $\sqrt{d_{C_{\phi}}}$ of the delta-function on
$C_{\phi}$ defined by $\delta(d_{\theta} \phi)$, transported to
its image in $T^* M$ under $\iota_{\phi}$. If $(\lambda_1, \dots,
\lambda_n)$ are any local coordinates on $C_{\phi}$, extended as
smooth functions in  neighborhood, then
$$ d_{C_{\phi}}: = \frac{|d \lambda|}{|D(\lambda,
\phi_{\theta}')/D(x, \theta)|}, $$ where $d \lambda$ is the
Lebesgue density.

\subsection{Local Weyl law for Fourier integral operators}

It was proved in \cite{Z2} (see also \cite{TZ,TZ2,DZ}) that if average the functionals
$\rho_j$, then the limit is a measure on the unit vectors in the
intersection $C \cap \Delta_{T^*M \times T^*M}$ of the canonical
relation $C$ with the diagonal in $T^*M \times T^*M$. That is, one has the local Weyl law,
\begin{equation} \label{LWL} \frac{1}{N(\lambda)} \sum_{j: \lambda_j \leq \lambda} \rho_j \to
\rho_{LWL},  \end{equation}  where the local Weyl law measure is given by 
$$\rho_{LWL}(F) = \int_{S (C \cap \Delta_{T^*M \times T^*M})}
\sigma_F d\nu, $$ where $d\nu$ is a `half-density  measure' and $S
(C \cap \Delta_{T^*M \times T^*M})$ is the set of unit covectors
in $ C \cap \Delta_{T^*M \times T^*M}$. In the case where $C$ is a
local canonical graph, this intersection is the fixed point set of
the correspondence $\chi$ and we write it as $S Fix(\chi)$.  

Note that the trace operation concentrated the average of the $\rho_j$ on the diagonal part of $C$. 
The individual matrix elements do not have this property, even though $\langle F \phi, \phi \rangle
= Tr F \phi \otimes \phi^*$ is a trace.

\section{Invariant states on $I^0(M \times M, C)$: Proof of Propositions \ref{w*L} and \ref{1}}

\subsection{Fourier integral operators associated to local canonical graphs}

The proof of Propositio \ref{1} is similar to the case of pseudo-differential operators proved in \cite{W}.

We first recall 
\begin{theo} (\cite{HoI-IV}, Theorem 25.3.1) If $C$ is a local canonical graph and $A \in I^0(M \times M, C)$,
then $A: L^2(M) \to L^(M)$ is bounded, and it is compact if the symbol of $A$ tends to 0 as $|\xi| \to \infty$. 
\end{theo}

We then prove
\begin{lem}  If $C$ is a local canonical graph and $A \in I^0(M \times M, C)$ then
\begin{equation} \label{NORM}   \sup_{S^* M}  |\sigma_A| = \inf_{K} ||A + K||.  \end{equation} \end{lem}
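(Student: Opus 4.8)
The plan is to establish the two inequalities in \eqref{NORM} separately. The inequality $\inf_K \|A + K\| \ge \sup_{S^*M} |\sigma_A|$ is the ``easy'' direction: for any compact operator $K$, the operator $A + K$ has the same principal symbol $\sigma_A$ on $C$ as $A$, since compact operators do not affect the leading-order symbol along the canonical relation (they correspond to lower-order contributions). One then constructs, at a point $(x_0,\xi_0) \in S^*M$ where $|\sigma_A|$ is nearly maximal and at the corresponding point of $C$ lying over it via $\pi_X$, a sequence of approximately concentrated test functions (coherent states / WKB states microlocalized near $(x_0,\xi_0)$) on which $A$ acts, up to $o(1)$, by multiplication by the symbol value, so that $\|(A+K) u_\lambda\| \ge (|\sigma_A(x_0,\xi_0)| - \epsilon)\|u_\lambda\| + o(1)$; letting $\lambda \to \infty$ and using that $K$ is compact (hence $Ku_\lambda \to 0$ since $u_\lambda \rightharpoonup 0$) gives $\|A + K\| \ge |\sigma_A(x_0,\xi_0)| - \epsilon$, and then $\ge \sup_{S^*M}|\sigma_A| - \epsilon$.

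For the reverse inequality $\inf_K \|A+K\| \le \sup_{S^*M}|\sigma_A|$, the strategy is the standard Hörmander-type parametrix argument adapted to the canonical-graph setting. Set $m = \sup_{S^*M}|\sigma_A|$ (assume finite; otherwise there is nothing to prove) and consider, for $\delta > 0$, the operator $B_\delta = (m^2 + \delta) I - A^* A$, which is a pseudodifferential operator of order $0$ (here one uses that $C$ is a local canonical graph so that $A^* \in I^0(M\times M, C^{-1})$ and the composition $A^*A \in \Psi^0(M)$ by the clean composition theorem cited in the excerpt, with $\sigma_{A^*A} = |\sigma_A|^2$ on the diagonal). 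Its symbol $(m^2 + \delta) - |\sigma_A|^2 \ge \delta > 0$ is elliptic and bounded below, so $B_\delta$ admits an approximate square root $Q_\delta \in \Psi^0(M)$ with $Q_\delta^* Q_\delta = B_\delta + R$ for some smoothing (hence compact) $R$; this is the symbol-calculus construction of $\sqrt{B_\delta}$. Then $A^*A = (m^2+\delta)I - Q_\delta^*Q_\delta - R$, so for $u \in L^2$, $\|Au\|^2 = (m^2+\delta)\|u\|^2 - \|Q_\delta u\|^2 - \langle Ru, u\rangle \le (m^2 + \delta)\|u\|^2 - \langle Ru,u\rangle$. Hence $\|A + (\text{something})\| $ --- more precisely, one rewrites this as $\|(A^*A + R)\| \le m^2 + \delta$, i.e. $\|A\|^2_{\text{mod compact}} \le m^2 + \delta$; since $\delta$ is arbitrary, $\inf_K\|A + K\|^2 = \inf_K\|A^*A + K'\| \le m^2$ (using that the essential norm of $A$ squared equals the essential norm of $A^*A$).

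The main obstacle is the second direction, and specifically the construction of the approximate square root $Q_\delta$ and the bookkeeping that the composition $A^*A$ really does land in $\Psi^0(M)$ with the expected principal symbol $|\sigma_A|^2$ --- this is exactly where the hypothesis that $C$ is a \emph{local canonical graph} (so that $C^{-1}\circ C$ is the diagonal and the composition is clean with excess zero) is essential, and one must check that the symbol of $A^*A$ restricted to the diagonal of $S^*M$ is $|\sigma_A|^2$ transported by $\pi_X$, which involves keeping track of the half-density factors in $\sigma_A$ so that $\sigma_A \overline{\sigma_A}$ is an honest function on $S^*M$. In the branched-covering case one should note the symbol may pick up a sum over sheets, but the branch locus has measure zero and the sup is unaffected. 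I would also need to be a little careful about the coherent-state lower bound when $\pi_X$ is branched over $(x_0,\xi_0)$; choosing $(x_0,\xi_0)$ to be a regular value (a dense set, and $\sup$ is unchanged) avoids this. The rest is routine symbol calculus as in \cite{HoI-IV}.
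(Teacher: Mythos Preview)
Your argument is correct and follows essentially the same route as the paper: the key reduction in both is that $A^*A \in \Psi^0(M)$ with principal symbol $|\sigma_A|^2$ (this is where the local canonical graph hypothesis enters), together with the Calkin-algebra identity $\inf_K\|A+K\|^2 = \inf_{K'}\|A^*A+K'\|$. The paper simply cites the known pseudodifferential identity $\sup|\sigma_B| = \inf_K\|B+K\|$ for $B = A^*A$ and is done; you instead reprove that identity via the approximate square root of $(m^2+\delta)I - A^*A$, and you give a separate coherent-state argument for the lower bound, which the paper does not need since both inequalities come for free once the $\Psi^0$ result is applied to $A^*A$. One small imprecision: for an FIO, $A$ does not act on a coherent state at $(x_0,\xi_0)$ by ``multiplication'' --- it carries it to a coherent state near the image point on $C$ --- but the norm identity $\|Au_\lambda\| \sim |\sigma_A(x_0,\xi_0)|\,\|u_\lambda\|$ you actually use is correct.
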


\begin{proof} The equality \eqref{NORM} is well known for $A \in \Psi^0(M)$. To generalize it to Fourier
integral operators associated to local canonical graphs it suffices to use that,
 in a sufficiently small cone,  $C$ is the graph of a canonical transformation. Then as in the proof of \cite{HoI-IV},
Theorem 25.3.1, 
$A^* A \in \Psi^0(M)$ and $\sigma_{A^* A} = |\sigma_A|^2$. It follows that
$$\sup_{T^* M} |\sigma_A|^2 = \inf_{K \; \mbox{compact}} ||A^* A + K| |=  \inf_{K \; \mbox{compact}} ||A+ K| |^2. $$ 
Here we use that for any $u \in L^2$ with $||u|| = 1$,
$||(A + K) u||^2 = \langle (A^* A + K_1) u, u \rangle $ for another compact operator $K_1$ and that
$ ||A^* A + K| | = \sup_{||u||=1} |\langle (A^* A + K_1) u, u \rangle| $ when $K$ is self-adjoint.
\end{proof}

If $K$ is any compact operator on $L^2(M)$ then $\langle K \phi_j, \phi_j \rangle \to 0$. Indeed, $\phi_j \to 0$ weakly
in $L^2$ and so $K \phi_j \to 0$ in norm. The principal symbol of $F$ determines
$F$ up to an element of $I^{-1}(M \times M, C)$ and the operators in this class are compact.  This proves
Proposition \ref{w*L}.

We now complete the proof of Propositions \ref{w*L} - \ref{1}.

\begin{proof}

For any compact operator $K$,  $\langle K \phi_j, \phi_j \rangle
\to 0$. Hence,  any limit of $\langle A \phi_k, \phi_k \rangle$ is
equally a limit of  $\langle (A + K) \phi_k, \phi_k \rangle$. By
the norm estimate, the limit  is bounded by $\inf_K ||A + K||$
(the infimum taken over compact operators).   Hence any weak limit
is bounded by a constant times $||\sigma_A||_{L^{\infty}} $ and is
therefore continuous on $C(S^*M)$. It is a positive functional
since each $\rho_j$ is and hence any limit is a probability
measure.

To prove the invariance of the limit measure, we apply an Egorov type theorem to $U^{-t} F U^t$
for $F \in I^0(M \times M, C)$ and for fixed $t$. The canonical relation of the composition is the composition
$$\Gamma_t^* \circ C \circ \Gamma_t = C,$$
where
$$\Gamma_t = \{( x, \xi, G^t(x, \xi)): |\tau| + |\xi| = 0\} .$$
Hence only the symbol $\sigma_F$ is changed. If we choose a nowhere vanishing half-density on $C$ (e.g. the
graph half-density corresponding to the symplectic voluime density on $T^*M$), then $\sigma_F$ may be
identified with a scalar function and the composite symbol is its pull-back under $G^t$. 

 By  invariance of the $\rho_k$,
any limit of $\rho_k(A)$ is a limit of $\rho_k(Op (\sigma_A \circ
\Phi^t))$ and hence the limit measure is invariant. It is also
time-reversal since the eigenfunctions are real-valued, i.e.
complex conjugation invariant.
\end{proof}

\subsection{Off-diagonal matrix elements \label{OFFDIAG}}

A similar argument applies to off-diagonal  matrix elements 
\begin{equation} \label{rhojk} \rho_{jk} (A) = \langle
 A
 \phi_j, \phi_k\rangle. \end{equation}
The discussion is very similar to that in \cite{Z5} in the pseudo-differential case.

\begin{prop} \label{OD}Let $\rho_{\infty}$ be a limit of the sequence of functionals \eqref{rhojk} with
the gap $\lambda_j - \lambda_k \to \tau$
on  $I^0(M, \times M, C)$. Suppose that the canonical relation $C$
is invariant under the geodesic flow $G^{-t} \times G^t$. Then  $\rho_{\infty}$ is a  signed $G^t$-eigennmeasure of mass
$\leq 1$ on $C$.
\end{prop}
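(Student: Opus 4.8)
The plan is to mimic the diagonal argument given in the proof of Propositions \ref{w*L}--\ref{1}, tracking the extra phase factor that appears in \eqref{QINVoff}. First I would reduce to the symbol level: since $\langle K\phi_j,\phi_k\rangle\to 0$ for every compact operator $K$ (because $\phi_j\to 0$ weakly and $\phi_k\to 0$ weakly, so $K\phi_j\to 0$ in norm while $\|\phi_k\|=1$), and since $F$ is determined modulo $I^{-1}(M\times M,C)\subset$ compacts by its principal symbol, any weak$^*$ limit $\rho_\infty$ depends only on $\sigma_F$. Combined with the norm estimate $\sup_{S^*M}|\sigma_A|=\inf_K\|A+K\|$ of the Lemma (applicable here since $C$ is a local canonical graph), this shows $|\rho_\infty(F)|\le C\sup_{SC}\|\sigma_F\|_{C^0}$, so $\rho_\infty$ is a complex measure on $SC$ of mass $\le 1$. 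The positivity/probability conclusion of the diagonal case is of course lost here; "mass $\le 1$" is the right replacement and follows from $|\rho_{jk}(F)|\le\|F\phi_j\|\cdot\|\phi_k\|\le\|F\|$ together with the norm estimate.

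Next I would establish the eigenmeasure property. Apply the Egorov-type argument to $U^{-t}FU^t$ for fixed $t$: since $C$ is assumed $G^{-t}\times G^t$-invariant, the canonical relation of $U^{-t}FU^t$ is again $C$ and only the symbol changes, by pull-back under $G^t$ (using a nowhere-vanishing half-density on $C$ to identify $\sigma_F$ with a scalar function, exactly as in the proof of Proposition \ref{1}). Now invoke \eqref{QINVoff}: $\rho_{jk}(U^{-t}FU^t)=e^{it(\lambda_j-\lambda_k)}\rho_{jk}(F)$. Passing to the limit along the subsequence realizing $\rho_\infty$, and using the gap hypothesis $\lambda_j-\lambda_k\to\tau$, the left side tends to $\rho_\infty\big(\sigma_F\circ(G^t\times G^{-t})\big)$ — equivalently $(G^t)^*\rho_\infty$ tested against $\sigma_F$ — while the right side tends to $e^{it\tau}\rho_\infty(\sigma_F)$. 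Hence $(G^t)_*\rho_\infty=e^{-it\tau}\rho_\infty$ (with the obvious sign convention for the push-forward), i.e. $\rho_\infty$ is a signed $G^t$-eigenmeasure with eigenvalue $e^{-it\tau}$, for all $t$. This is the precise meaning of "$G^t$-eigenmeasure" in the statement; when $\tau=0$ one recovers the invariant-measure conclusion of Corollary \ref{2}.

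The one point requiring a little care — and the main (minor) obstacle — is the convergence step: a priori $\rho_\infty$ is defined as a limit along a single sequence $j=j(\ell),k=k(\ell)$, and one must check that the \emph{same} sequence, after applying $U^{-t}FU^t$ (which does not change the eigenfunctions, only conjugates the operator), still converges, which it does since $U^{-t}FU^t\in I^0(M\times M,C)$ lies in the same class on which $\rho_\infty$ is already assumed to converge, and since for each fixed $t$ the map $\sigma_F\mapsto\sigma_F\circ(G^t\times G^{-t})$ is a bounded bijection of symbols. One also wants the gap $\lambda_j-\lambda_k\to\tau$ to survive along the relevant subsequence, which is part of the hypothesis. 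Finally, the time-reversal/reality refinement (complex conjugation invariance of the eigenfunctions, as in Proposition \ref{SA}) can be appended if desired: conjugating \eqref{rhojk} gives $\overline{\rho_{jk}(F)}=\rho_{kj}(F^t)$, which imposes the corresponding symmetry $c_*\overline{\rho_\infty}$ relating the eigenmeasure for $\tau$ and the one for $-\tau$; I would mention this but not belabor it, since the argument is the same as in the self-adjoint diagonal case.
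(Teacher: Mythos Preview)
Your proposal is correct and follows essentially the same route as the paper: the paper's proof simply says that the argument for Proposition~\ref{1} goes through verbatim except that in the last step one uses the quasi-invariance \eqref{QINVoff} in place of \eqref{QINV}, yielding a $G^t$-eigenmeasure with eigenvalue $e^{it\tau}$ rather than an invariant measure. You have written out in full the details the paper leaves implicit (the compact-operator reduction, the norm bound giving mass $\le 1$, and the passage to the limit through \eqref{QINVoff}), and your care about the convergence subsequence and the sign convention for the eigenvalue is appropriate but not something the paper addresses.
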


The only change to the proof of Proposition \ref{1} is in the last step. The functionals $\rho_{jk}$ are no longer
invariant but rather satisfy \eqref{QINVoff}. It follows that
if $\lambda_j - \lambda_k \to \tau$ then any limit measure is a $G^t$ eigenmeasure on $C$ with eigenvalue
$e^{it \tau}$.

\section{\label{AO} Almost orthogonality: Proof of Proposition \ref{ANCsympcor}}

This section is motivated by the proof of the quantum ergodic restriction theorem in \cite{TZ,TZ2} (see also \cite{DZ}).
The criterion for QER in those papers is an almost nowhere commutativity condition between two canonical transformations,
or equivalently an almost nowhere invariance problem. It is  not clear that the condition for QER in those papers
is sharp.  

We now prove Proposition \ref{ANCsympcor}.

\begin{proof} 

We are assuming that  for $t > 0$, the
set $\ccal_{t} = \{(x, \xi) \in S^* M: G^t \chi = \chi G^t\}$ has
Liouville measure zero.

It suffices to show that
 \begin{equation} \label{VARIANCE} \frac{1}{N(\lambda)} \sum_{\lambda_j \leq \lambda}
   |\langle F
    \phi_{\lambda_j}, \phi_{\lambda_j} \rangle  |^{2} = o(1) \,\,\, \text{as} \,\, \lambda \rightarrow \infty. \end{equation}
We put
$$F(t) = U^{t*} F U^t, \;\; \langle F \rangle_T : = \frac{1}{T} \int_{-T}^T F(s) ds, \;\; \mbox{where}\;\; U^t = e^{i t \sqrt{\Delta}}. $$
Then $\langle F(t)
    \phi_{\lambda_j}, \phi_{\lambda_j} \rangle = \langle F
    \phi_{\lambda_j}, \phi_{\lambda_j} \rangle. $

For any operator $A$ we have $$|\langle A \phi_{\lambda},
\phi_{\lambda} \rangle|^2 \leq \langle A^* A \phi_{\lambda},
\phi_{\lambda} \rangle. $$ It follows that
$$|\langle F
    \phi_{\lambda_j}, \phi_{\lambda_j} \rangle  |^{2} \leq \langle \langle F \rangle_T^* \langle F \rangle_T \phi_{\lambda},
\phi_{\lambda} \rangle. $$ Hence,
 \begin{equation} \label{VARIANCEb} \frac{1}{N(\lambda)} \sum_{\lambda_j \leq \lambda}
   |\langle F
    \phi_{\lambda_j}, \phi_{\lambda_j} \rangle  |^{2} \leq \frac{1}{N(\lambda)} \sum_{\lambda_j \leq \lambda}
    \langle \langle F \rangle_T^* \langle F \rangle_T \phi_{\lambda},
\phi_{\lambda} \rangle. \end{equation} We now let  $\lambda \to
\infty$ and use the local Weyl law \eqref{LWL} for Fourier integral operators.
We have,
$$ \langle F \rangle_T^* \langle F \rangle_T  = \frac{1}{T^2}
\int_{- T}^T \int_{-T}^T F(s)^* F(t) ds dt. $$ Since we are taking
a trace, we can cycle the $U^t$ to get
$$ \langle F \rangle_T^* \langle F \rangle_T  = \frac{1}{T^2}
\int_{- T}^T \int_{-T}^T U(t - s)^* F^* U(t - s) F  ds dt. $$ We
change variables to $u = \frac{t - s}{2}, v = \frac{t + s}{2} $
and simplify to get
$$ \langle F \rangle_T^* \langle F \rangle_T  = \frac{1}{T}
\int_{-T}^T U(t)^* F^* U(t) F  \rho_T(t) dt. $$ Here, $\rho_T(t)$
is the measure in $[-T, T] \times [-T, T]$ of $\{(s, s'): s - s' =
t\}$. For each $t$ the local Weyl law gives
\begin{equation} \label{VARIANCEc}  \frac{1}{N(\lambda)} \sum_{\lambda_j \leq \lambda}
    \langle   U(t)^* F^* U(t) F  \phi_{\lambda},
\phi_{\lambda} \rangle \to \int_{S Fix G^{-t} \chi^{-1} G^t \chi}
\sigma_{t, F},
\end{equation}
where $\sigma_{t, F}$ is a composite density on the fixed point
set. The fixed point set is exactly the set where $\chi G^t = G^t
\chi$ and has measure zero for all $t \not= 0$. Hence the integral
tends to zero for all $T > 0$.

\end{proof}

This arguments works too well because the assumption is so strong.
A related argument is to just assume that there exists $t_0$ so
that $\chi G^{n t_0} = G^{n t_0} \chi$ only holds on a set of
measure zero. It is this argument which was in effect used in \cite{TZ,TZ2}.

\begin{prop} \label{ANCsympcora}Let $F$ be a Fourier integral operator associated to
a symplectic correspondence $\chi$. Assume that there exists $t_0 \not= 0$ so that for  $n = 1, 2, 3 \dots$, the
set $\ccal_{t} = \{(x, \xi) \in S^* M: G^{n t_0} \chi = \chi G^{n t_0}\}$ has
Liouville measure zero. Then there is a density one subsequence of
eigenfunctions so that $\langle F \phi_{\lambda_j},
\phi_{\lambda_j} \rangle \to 0$. \end{prop}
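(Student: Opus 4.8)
The plan is to repeat the averaging argument used in the proof of Proposition \ref{ANCsympcor}, but to discretize the time average so that the only instants of the geodesic flow that enter are the multiples $nt_0$, for which we are given the Minkowski/Liouville-measure-zero hypothesis. Set $F(t) = U^{-t} F U^t$ and, instead of $\langle F\rangle_T$, use the averaged operator $\langle F\rangle_T^{\mathrm{disc}} := \frac{1}{2T+1}\sum_{n=-T}^{T} F(n t_0)$ over the discrete orbit. Because $U^{t_0}$ is unitary and $\langle F(nt_0)\phi_{\lambda_j},\phi_{\lambda_j}\rangle = \langle F\phi_{\lambda_j},\phi_{\lambda_j}\rangle$ for every $n$, we have $\langle F\rangle_T^{\mathrm{disc}}\phi_{\lambda_j},\phi_{\lambda_j}\rangle = \langle F\phi_{\lambda_j},\phi_{\lambda_j}\rangle$ exactly as before, so the Cauchy--Schwarz inequality $|\langle F\phi_\lambda,\phi_\lambda\rangle|^2 \le \langle (\langle F\rangle_T^{\mathrm{disc}})^* \langle F\rangle_T^{\mathrm{disc}}\phi_\lambda,\phi_\lambda\rangle$ holds verbatim.

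Next I would expand $(\langle F\rangle_T^{\mathrm{disc}})^*\langle F\rangle_T^{\mathrm{disc}} = \frac{1}{(2T+1)^2}\sum_{n,m=-T}^{T} F(nt_0)^* F(mt_0)$ and cycle the unitaries under the trace $\langle\,\cdot\,\phi_\lambda,\phi_\lambda\rangle$ (which is a trace against the rank-one projector $\phi_\lambda\otimes\phi_\lambda^*$), obtaining $\frac{1}{(2T+1)^2}\sum_{n,m} U((n-m)t_0)^* F^* U((n-m)t_0) F$. Reindexing with $k = n-m$ and collecting the multiplicity $c_T(k) = \#\{(n,m): n-m=k\}$, this becomes $\sum_{|k|\le 2T} \frac{c_T(k)}{(2T+1)^2}\, U(kt_0)^* F^* U(kt_0) F$, a finite combination of the operators $G_k := U(kt_0)^* F^* U(kt_0) F$, each of which is a Fourier integral operator associated to the canonical relation $\Gamma_{kt_0}^*\circ C^t\circ C\circ\Gamma_{kt_0}$. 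Applying the local Weyl law \eqref{LWL} for Fourier integral operators to each $G_k$ individually, $\frac{1}{N(\lambda)}\sum_{\lambda_j\le\lambda}\langle G_k\phi_{\lambda_j},\phi_{\lambda_j}\rangle$ converges to the integral of the composite symbol over $S\,\mathrm{Fix}(G^{-kt_0}\chi^{-1}G^{kt_0}\chi)$, which is exactly the set $\ccal_{kt_0}$; for $k\ne 0$ this has measure zero by hypothesis, so that term vanishes in the limit, while the $k=0$ term contributes with weight $c_T(0)/(2T+1)^2 = 1/(2T+1)$.

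Thus $\limsup_{\lambda\to\infty}\frac{1}{N(\lambda)}\sum_{\lambda_j\le\lambda}|\langle F\phi_{\lambda_j},\phi_{\lambda_j}\rangle|^2 \le \frac{C}{2T+1}$ for every $T$, and letting $T\to\infty$ forces the left side to be zero. The standard fact that a nonnegative sequence with vanishing Cesàro mean has a density-one subsequence tending to zero then yields the conclusion. The main obstacle — really the only delicate point — is justifying that the local Weyl law of \S\ref{Local} applies uniformly enough to the finitely many operators $G_k$ and that the weights behave as claimed; one must check that $G_k$ genuinely lies in $I^0(M\times M, \cdot)$ with the expected canonical relation (clean composition, since $C$ is a local canonical graph so compositions with the graph $\Gamma_{kt_0}$ are transversal) and that the composite symbol's integrand is supported on $S\ccal_{kt_0}$. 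Since $T$ is fixed while $\lambda\to\infty$, only finitely many $k$ occur, so no uniformity in $k$ is needed and the interchange of limits is harmless. The passage from the Minkowski-measure-zero hypothesis of Proposition \ref{ANCsympcor} to the Liouville-measure-zero statement here is automatic, as a Minkowski-null set is Lebesgue-null.
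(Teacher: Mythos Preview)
Your argument is correct and follows essentially the same route as the paper: discretize the time average over multiples of $t_0$, apply Cauchy--Schwarz, expand the square and reindex by the difference $k=n-m$, then invoke the local Weyl law termwise so that every $k\neq 0$ contribution vanishes by the measure-zero hypothesis while the $k=0$ term decays like $1/(2T+1)$. Your version is in fact a bit more explicit about the weights and about why no uniformity in $k$ is needed.
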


\begin{proof}

We define
$$\langle F \rangle_M : = \frac{1}{2 M} \sum_{m = - M}^M U^{- m
t_0} F U^{m t_0}. $$ Going through the same argument gives the
upper bound
\begin{equation} \label{VARIANCEbc} \frac{1}{N(\lambda)} \sum_{\lambda_j \leq \lambda}
   |\langle F
    \phi_{\lambda_j}, \phi_{\lambda_j} \rangle  |^{2} \leq \frac{1}{N(\lambda)} \sum_{\lambda_j \leq \lambda}
    \langle \langle F \rangle_M^* \langle F \rangle_M \phi_{\lambda},
\phi_{\lambda} \rangle. \end{equation} We then have
$$\begin{array}{lll} \langle F \rangle_M^* \langle F \rangle_M &= &\frac{1}{M^2} U^{t_0
(m - n)*} F^* U^{t_0 (m - n))} F \\ && \\
& = & \frac{1}{M} \sum_{p = - M}^M \frac{\# p}{M} U^{t_0 p *} F^*
U^{t_0 p} F,  \end{array} $$ where $\# p = \# \{(m, n) \in [- M, M
] \times [- M, M ]: m - n = p \}. $ We then apply the local Weyl
law and find that the only term which makes a non-vanishing
contribution is $p = 0$. So it is $O(\frac{1}{M})$.

\end{proof}

\subsection{\label{Almostdisjoint} Almost disjoint energy surfaces}

The commutator $[\sqrt{\Delta}, F]$ is always  of order one if $F \in I^0(M \times M, C)$. 
The symbol of $\sqrt{\Delta} F$ at $(x, \xi, y, \eta)$ is the
product $|\xi|_x \sigma_F$ while in the other order we have
$|\eta|_y \sigma_F$. So they do not cancel unless $\chi$ preserves
$S^*_g X$, which is not the case when they almost never commute.

\begin{lem} \label{chigtNC}  Suppose that $\chi$ and $G^t$ almost never commute.
Then $\chi(S^*_g M) \cap S^*_g M$ has Liouville measure zero. \end{lem}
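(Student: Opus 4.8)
The plan is to argue by contradiction: suppose that $E := \chi(S^*_g M) \cap S^*_g M$ has positive Liouville measure. The intuition is that on $E$, the map $\chi$ sends unit covectors to unit covectors, so the obstruction to commuting with $G^t$ disappears there — the homogeneity scaling that prevented $\sqrt{\Delta}F$ and $F\sqrt{\Delta}$ from having matching symbols (discussed just above the lemma) is exactly the condition $|\xi|_x = |\eta|_y$, which holds on $E$. First I would reduce to the level of the canonical relation: recall that $C$ is a local canonical graph, so near a generic point it is the graph of a symplectomorphism $\chi$ of $T^*M$, and $C_t \cap C$ is (the conic closure of) the graph of $\chi$ over the set where $G^{-t}\chi G^t = \chi$, i.e. where $\chi$ and $G^t$ commute. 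By hypothesis this set has measure zero for all $t \neq 0$.

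Next I would analyze the structure of $\chi$ restricted to (a positive-measure subset of) $E$. The key point is that $\chi$ is homogeneous of degree one with respect to the $\R_+$-scaling on fibers, while $E$ lies in the fixed "sphere" $S^*_g M = \{|\xi|_g = 1\}$. If $\chi$ maps a positive-measure subset of $S^*_g M$ back into $S^*_g M$, then by homogeneity $\chi$ maps a positive-measure open-in-the-cone region to a region contained in $S^*_g M$'s cone $\Sigma = \R_+ S^*_g M$; more precisely, the condition $|\chi(x,\xi)|_g = |\xi|_g$ holds on a set of positive Liouville measure. I would show that this forces $\chi$ to commute with the Hamiltonian flow $G^t$ of $|\xi|_g$ on that set, by the following reasoning: the function $H(x,\xi) = |\xi|_g$ and $H \circ \chi$ agree on a positive-measure set; since both are real-analytic (or at least smooth and homogeneous) and $\chi$ is a symplectomorphism, equality of $H$ and $H\circ\chi$ on a positive-measure set — combined with the conic structure so that this set contains an open subset of some cone — propagates along the flow, giving $\chi_* X_H = X_{H\circ\chi} = X_H$ on that set, i.e. $\chi$ intertwines $G^t$ with itself there.

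The main obstacle I expect is the regularity/propagation step: deducing that $H = H\circ\chi$ on a set of positive measure implies commutation of the flows on a set of positive measure. If one only knows smoothness, positive measure alone is not enough to force agreement of the flows; one needs the set on which they agree to contain an open set. Here the conic homogeneity saves the day — the set $E$ is conic (or its preimage is), and a conic set of positive measure in $S^*M$ that contains a positive-measure subset of each sphere must, after discarding a null set, contain relatively open pieces, or one argues directly that $\{(x,\xi): |\chi(x,\xi)|_g = |\xi|_g\}$ is a closed conic set whose positive measure forces it to have nonempty interior when $\chi$ is, say, real-analytic, or more robustly one works with the identity $G^{-t}\chi G^t = \chi$ rephrased as $\chi$ preserving the level sets of $H$. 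I would therefore phrase the final step as: on the positive-measure set $E$ one has $G^{-t} \circ \chi \circ G^t = \chi$ for all $t$ (this is where the argument must be made carefully, using that $\chi$ restricted to where it preserves $S^*_gM$ automatically preserves all nearby spheres by continuity and homogeneity), contradicting the almost-nowhere-commutativity hypothesis. Hence $E$ has measure zero.
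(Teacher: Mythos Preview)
Your approach is essentially the same as the paper's: assume $\chi^* H = H$ (with $H(x,\xi) = |\xi|_g$) on a set of positive measure, pass to Hamiltonian vector fields to get $\chi_* \xi_H = \xi_H$ there, and conclude that $\chi$ commutes with $G^t$ on a positive-measure set, contradicting the hypothesis. The regularity/propagation obstacle you flag is genuine and the paper disposes of it with a single ``similarly for any set of positive measure''; your more explicit discussion of why positive measure suffices (via analyticity or the closed conic structure of $\{H = H\circ\chi\}$) is if anything more careful than the paper's own treatment.
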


Let $H(x, \xi) = |\xi|_g$ be the metric norm function. The
Hamiltonian flow of $\chi^* H$ is $\chi G^t \chi^{-1}$. The orbit
of $(x, \xi)$ under this flow is almost certainly disjoint from
the $G^t$ orbit of $(x, \xi)$. Also the Hamiltonian vector field
$\xi_H$ of $H$ almost certainly satisfies $\chi_* \xi_H \not=
\xi_H$.  If $\chi^*H = H$ on an open set, then we take the
Hamiltonian vector field of both sides to get $\chi_* \xi_H =
\xi_H$. Similarly for any set of positive measure.

We can then give a second proof of Proposition \ref{ANCsympcor}. 

\begin{proof}  It is well-known that $\Delta$-eigenfunctions concentrate microlocally on 
the energy surfaces $|\xi| = E$ (see \cite{Zw},  Theorem 6.4).  In the homogeneous setting we may identify  all
the energy surfaces
with $S^* M$.  It follows that  $F \phi_j$  microlocally concentrates on $\chi(S^* M)$. In other words
we may construct semi-classical cutoffs $Op_{\hbar}(b)$ to $S^* M$ and $Op_{\hbar}(\chi^* b)$ so
that $Op_{\hbar} \phi_j = \phi_j + \ocal(\hbar), Op_{\hbar}(\chi^* b) F \phi_j = F \phi_j + \ocal(\hbar). $
By 
Lemma \ref{chigtNC}, the intersection has Liouville measure zero. It follows that
$$ \langle F \phi_j, \phi_j \rangle = \langle Op_{\hbar}(\chi^* b) F \phi_j,
Op_{\hbar}(b)  \phi_j  \rangle + \ocal(\hbar)  \leq || Op_{\hbar}(\chi^* b)   Op_{\hbar}(b) 
\phi_j|| + \ocal(\hbar). $$
But $Op_{\hbar}(\chi^* b)   Op_{\hbar}(b)  = Op_{\hbar}(b \chi^* b) + \ocal(\hbar)$. Then 
$$\begin{array}{lll} \frac{1}{N(\lambda)} \sum_{\lambda_j \leq \lambda}
   |\langle F
    \phi_{\lambda_j}, \phi_{\lambda_j} \rangle  |^{2}  & \leq & \frac{1}{N(\lambda)} \sum_{\lambda_j \leq \lambda}
   |\langle Op_{\hbar}(b \chi^* b)
    \phi_{\lambda_j}, \phi_{\lambda_j} \rangle  |^{2}  + \ocal(\lambda^{-1}) \\&&\\
&\leq &  \frac{1}{N(\lambda)} \sum_{\lambda_j \leq \lambda}
   \langle \left(Op_{\hbar}(b \chi^* b)^* p_{\hbar}(b \chi^* b \right)
    \phi_{\lambda_j}, \phi_{\lambda_j} \rangle  |^{2}  + \ocal(\lambda^{-1}) \\&&\\& \to &  \int_{S^* M} b^2 \chi^* b^2 \;\; \mbox{as}\;\; \lambda \to \infty.  \end{array}$$
But  by assumption, for any $\epsilon > 0$ one may construct $b$ so that  the support of $b \times \chi^* b$ has volume $\leq \epsilon$. It follows that  $  \int_{S^* M} b^2 \chi^* b^2 \leq \epsilon$,
and therefore $\frac{1}{N(\lambda)} \sum_{\lambda_j \leq \lambda}
   |\langle F
    \phi_{\lambda_j}, \phi_{\lambda_j} \rangle  |^{2}  \to 0$.

\end{proof}

The proof indicates that although a subsequence of density one of the matrix elements tends to zero,
not all them need to.  It could be the case that a subsequence of eigenfunctions
$\phi_{j_k}$ concentrates microlocally on a closed geodesic  $\gamma$ and  that $\chi(\gamma) = \gamma$.
Then even though $\chi(S^* M) \cap S^* M$ has Liouville measure zero, the full sequence
 of  matrix elements $\langle F \phi_j, \phi_j \rangle$
need not tend to zero.

\begin{rem} It is natural to ask how sharp the almost nowhere invariance or commutation conditions
are in the proof of almost orthogonality.  That is, we ask whether the following converse
to Proposition \ref{ANCsympcor} is true:
\medskip

\noindent{\bf Question:}  Denote the canonical relation
of $F$ by $C$. Then in the notation of this section, do we have,  
$$ \frac{1}{N(\lambda)} \sum_{\lambda_j \leq \lambda}
   |\langle F
    \phi_{\lambda_j}, \phi_{\lambda_j} \rangle  |^{2}  \to 0   \implies  \mbox{C is almost nowhere invariant under}\;\; g^t? $$


\end{rem}

\subsection{\label{QM} Almost nowhere commuting quantum maps on K\"ahler manifolds}

We now prove the analogue, Proposition \ref{KAHLERintro},  in the \kahler setting. Let $\chi_1,
\chi_2$ be two quantizable symplectic diffeomorphisms of a compact
\kahler manifold $(M, \omega)$ which almost nowhere commute in the
sense that the set where $\chi_1 \chi_2 = \chi_2 \chi_1$ has
measure zero. We then let $U_{\chi_1}$ denote the quantization of
$\chi_1$ as a unitary operator on $H^0(M, L^k)$ and we let $F$
denote any quantum map quantizing $\chi_2$.


An interesting comparison to the previous case is that the
semi-classical  eigensections $\phi_{k, j}$ do not appear to have
any localization properties which account for the almost
orthogonality of the matrix elements.
\bigskip

\noindent{\bf Proof of Proposition \ref{KAHLERintro}}
\bigskip

We again consider the partial time average
$$\langle F \rangle_M : = \frac{1}{2 M} \sum_{m = - M}^M U^{- m
} F U^{m }. $$ Going through the same argument gives the upper
bound
\begin{equation} \label{VARIANCEbcb} \frac{1}{N(\lambda)} \sum_{\lambda_j \leq \lambda}
   |\langle F
    \phi_{\lambda_j}, \phi_{\lambda_j} \rangle  |^{2} \leq \frac{1}{N(\lambda)} \sum_{\lambda_j \leq \lambda}
    \langle \langle F \rangle_M^* \langle F \rangle_M \phi_{\lambda},
\phi_{\lambda} \rangle. \end{equation} We then have
$$\begin{array}{lll} \langle F \rangle_M^* \langle F \rangle_M &= &\frac{1}{M^2} U^{t_0
(m - n)*} F^* U^{t_0 (m - n))} F \\ && \\
& = & \frac{1}{M} \sum_{p = - M}^M \frac{\# p}{M} U^{t_0 p *} F^*
U^{t_0 p} F,  \end{array} $$ where $\# p = \# \{(m, n) \in [- M, M
] \times [- M, M ]: m - n = p \}. $ We then apply the local Weyl
law and find that the only term which makes a non-vanishing
contribution is $p = 0$. So it is $O(\frac{1}{M})$ and therefore the limit equals zero. QED

\section{\label{MODSQ} Modulus squares as matrix elements }

In this section, we tie together the weak * limit problem for Fourier integral operators with some recent results on  modulus squares $|\phi_j^{\C}(z)|^2$ of analytic continuations
of eigenfunctions to Grauert tubes and their restrictions to geodesic arcs  (cf. \cite{Z8,Z10}), and to some related pointwise Weyl laws for coherent state projections in \cite{PU}.  
The relevant matrix elements 
differ from the preceding ones in that 
the underlying canonical or wave front relations  are not local canonical graphs. The results
are correspondingly different: the canonical relations are not invariant under the geodesic
flow, but Proposition \ref{ANCsympcor} is false for them. When properly normalized the
weak* limits can be non-zero. In fact, we will use an averaging argument or a flowout construction
  to make the canonical
relation geodesic flow invariant when $(x, \xi)$ is a periodic point. This is impossible 
for a local canonical graph which is almost nowhere invariant.

   \subsection{\label{GT} $|\phi_j^{\C}(z)|^2$ on a Grauert tube}

We first consider pointwise modulus squares of $|\phi_j^{\C}(z)|^2$ of analytic continuations of eigenfunctions
to Grauert tubes in the complexification of $M$.  
We refer to \cite{Z8, Z10} and their references for background on the analytic continuation
and the geometry of Grauert tubes.

As discussed at length in \cite{GS1,Z8},  a Grauert tube $M_{\epsilon}$  is a strictly pseudo-convex domain
in the complexification $M_{\C}$ of a real analytic Riemannian mannifold $(M, g)$. Its defining
function $\rho(z)$ is the analytic continuation of the squared  distance function $r^2(x, y)$
to $x = z, y = \bar{z}$. The Grauert tube function is its square root $\sqrt{\rho}(z)$ (we are 
ignoring here some constants). 
A key point is that  is the image of a ball bundle $B^*_{\epsilon} M$
under the imaginary time  exponential map $E: B_{\epsilon}^* M \to M_{\epsilon}$, $E(x, \xi) = \exp_x i \xi$. 
The Grauert tube function $\sqrt{\rho} $ corresponds to  the norm function $|\xi|_g$ of the metric under $E$. 
Moreover $E$ conjugates the geodesic flow to a Hamiltonian  flow  $g^t$ on $ M_{\tau}$ with respect to its
adapted K\"ahler form.

The principal Fourier integral operator in this context is the Poisson kernel $P^{\epsilon}(z, y)$
on $\partial M_{\epsilon} \times M$, defined as follows:
The wave group of $(M, g)$ is the unitary group $U(t) = e^{ i
 t \sqrt{\Delta}}$. Its kernel $U(t, x, y)$ solves the  `half-wave equation',
\begin{equation} \label{HALFWE} \left(\frac{1}{i} \frac{\partial }{\partial t} -
\sqrt{\Delta}_x \right) U(t, x, y) = 0, \;\; U(0, x, y) =
\delta_y(x). \end{equation}
The Poisson-wave kernel $P^{\tau}(z,y)$  is the analytic continuation $U( i \tau, x, y)$  of the wave kernel  with respect to time,  $ t \to  i \tau\in \R \times \R_+$ and then in $x$, i.e.
\begin{equation} \label{PK} P^{\tau}(z, y) = U(i \tau, z, y). \end{equation} Thus, the Poisson- kernel  has the eigenfunction expansion for $\tau > 0$,
\begin{equation}\label{POISEIGEXP}  U (i
\tau, x, y) = \sum_j e^{- \tau \lambda_j} \phi^{\C}_{\lambda_j}(z)
\phi_{\lambda_j}(y), \;\;\; z \in \partial M_{\tau}, y \in M.
\end{equation}
Thus,
$P^{\tau} \phi_j = e^{- \tau \lambda_j} \phi_j^{\C}. $

The key fact is that $P^{\tau}(z, y)$ is  a Fourier integral operator with complex phase.
It is adapted to the symplectic isomorphism  of $T^* M$ with the symplectic cone $\Sigma_{\tau}
\subset T^* \partial M_{\tau}$ generated by the contact form $\alpha_z$.  This result 
was stated by Boutet de Monvel in \cite{Bou}  but only recently have detailed proofs been published
(cf.  \cite{Z8,St}).  The main ingredient is the  analytic continuation of the wave kernel
due to  J. Hadamard.   

If we define
\begin{equation} \Phi^{\tau}_z(y) = P^{\tau}(z, y)  \in L^2(M),  \end{equation} 
then 
\begin{equation} \label{PTAUME} e^{- \tau \lambda_j} \phi_j^{\C}(z)  = \langle \Phi^{\tau}_z, \phi_j \rangle_{L^2(M)}. \end{equation}
It follows that 
\begin{equation} \label{STATE} e^{- 2 \tau \lambda_j}  |\phi_j^{\C}(z)|^2  = |\langle \Phi^{\tau}_z, \phi_j \rangle|^2
= \langle \Phi^{\tau}_z \otimes (\Phi^{\tau}_z)^* \phi_j, \phi_j \rangle,  \end{equation}
where $\Phi^{\tau}_z \otimes (\Phi^{\tau}_z)^* $ is the rank one projector onto $\Phi^{\tau}_z(y)$. 
Since
$P^{\tau}(z, y)$ is a homogeneous Fourier integral kernel with  complex phase on $\partial M_{\tau} \times M$,
the Schwartz kernel  $\Phi^{\tau}_z(y)  \overline{\Phi^{\tau}_z(y')} $  of $ \Phi^{\tau}_z \otimes (\Phi^{\tau}_z)^* $ 
is also a Fourier integral kernel with complex phase on $\partial M_{\tau} \times  M \times M$. If we  fix $z$
then $\Phi^{\tau}_z(y)  \overline{\Phi^{\tau}_z(y')} $   is a  Fourier integral kernel with complex phase on $ M \times M$.
The associated `canonical relation' is actually an isotropic relation. It  is a product relation of the form,
\begin{equation} \label{LAMBDAZ} \Lambda_z \times \Lambda_z^* \subset T^* M \times T^* M,  \end{equation}
where
$$\Lambda_z = \{(x, \xi): E(x, \xi) = z\}. $$
Fourier integral operators associated to isotropic relations were introduced in \cite{W}.

One of the
main problems with the  `states'
\eqref{STATE} is that they are not normalized. It is proved in \cite{Z8}  (Corollary 2) that 
$e^{- 2 \epsilon \lambda_j}  |\phi_j^{\C}(z)|^2 \leq C \lambda^{m - 1} $. The upper bound is sharp (it is
attained by highest weight spherical harmonics on the standard $S^m$) but it is generally not attained on a
generic analytic Riemannian manifold, nor is it attained at general points $z$ even when it is attained at
some point; and even when it is attained   at  some  $z$, it is  only attained by a sparse subsequence. 

To deal with these issues, we first observe that  the  diagonal matrix elements of $\Phi^{\tau}_z \otimes (\Phi^{\tau}_z)^*$ are the same as for the partial time averages
\begin{equation}\label{TA}  \langle \Phi^{\tau}_z \otimes (\Phi^{\tau}_z)^* \rangle_T : =
\frac{1}{2 T} \int_{-T}^T U^{t}  \left(\Phi^{\tau}_z \otimes (\Phi^{\tau}_z) \right)^* U^{-t} dt. \end{equation} 
The full time average is the limit $T \to \infty$ in \eqref{TA}  in the weak operator topology.
In general,  the Fourier integral properties of 
$\Phi^{\tau}_z \otimes (\Phi^{\tau}_z)^*$ are destroyed by infinite time averaging, or at best
they are unclear. 
But if
$z$ is a periodic point for $g^t$ of period $L$, then $\langle \Phi^{\tau}_z \otimes (\Phi^{\tau}_z)^* \rangle_L$
is a Fourier integral operator with complex phase on $M $ which commutes to leading order with $\sqrt{\Delta}$, i.e.
$$\begin{array}{lll} [ \sqrt{\Delta}, \langle \Phi^{\tau}_z \otimes (\Phi^{\tau}_z)^* \rangle_L] & = &
\frac{1}{2 L} \int_{-L}^L  \frac{d}{dt} U^{t}  \left(\Phi^{\tau}_z \otimes (\Phi^{\tau}_z) \right)^* U^{-t} dt  \\&&\\
& = & \frac{1}{2 L} \left(U^{L}  \left(\Phi^{\tau}_z \otimes (\Phi^{\tau}_z) \right)^* U^{-L}
-   \left(\Phi^{\tau}_z \otimes (\Phi^{\tau}_z) \right)^* \right). \end{array} $$
So for a periodic point, we set $T$ equal to an integer multiple of the periodic and  substitute the time average into \eqref{PTAUME}. Geometrically, this corresponds to replacing 
 \eqref{LAMBDAZ} by its flowout  
\begin{equation} \label{FLOWOUT} \bigcup_{t \in [0, L]}  \Lambda_{g^t(z)} \times \Lambda_{g^t(z)} = \gamma \times \gamma, \end{equation}
where $\gamma$ is the periodic orbit of $z$. Obviously it is   invariant under
$g^t \times g^t$. 

Secondly, we pull back $\phi_j^{\C}$ under a parametrization $\gamma^{\tau}(t)$ of
the $g^t$ orbit of $z$ on $\partial M_{\tau}$. That is, we restrict the complexified eigenfunction
to the orbit of $z$. We then  re-normalize $\gamma^{\tau *} e^{- 2 \tau \lambda_j}  |\phi_j^{\C}(z)|^2 $ to have mass one
on $\gamma$. Here we use the orbit of $g^t$ on $\partial M_{\tau}$ and define
\begin{equation} \label{Uj} U_j^{\tau} (t ) =  \frac{ \phi_j^{\C}(\gamma^{\tau}(t))}{|| \gamma^{\tau *} \phi_j^{\C}\||_{L^2([0, L])}}. 
\end{equation}

We then have 
\begin{prop} \cite{Z10} (Proposition 2)  \label{PERORB} If $z$ is a periodic point, and if $U_j$
does not vanish identitically on the orbit of $z$, then the unique weak * limit of the positive unit
mass measures $\{|U_j|^2\}$ on the orbit $\gamma$ of $z$ in $\partial M_{\tau}$  is 
$\frac{1}{L} dt$, the normalized periodic orbit measure on $\gamma.$
Equivalently, $\{|U_j|^2\} \to dt$ weakly as $j \to \infty$.
 \end{prop}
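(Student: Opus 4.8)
The plan is to recognize $|U_j^\tau|^2$ as a \emph{ratio} of diagonal matrix elements of a single Fourier integral operator with complex phase attached to the flowout \eqref{FLOWOUT}, to extract $g^t$-invariance of the limit from the exact wave-group invariance \eqref{QINV} via an Egorov argument, and then to invoke unique ergodicity of a periodic orbit. Concretely: since $z$ has period $L$, the orbit $\gamma\subset\partial M_\tau$ is a circle, parametrized by $t\mapsto\gamma^\tau(t)$, and the normalization in \eqref{Uj} makes $d\mu_j:=|U_j^\tau(t)|^2\,dt$ a Borel probability measure on the compact manifold $\gamma$ --- the hypothesis that $U_j$ does not vanish identically on $\gamma$ is exactly what makes $\|\gamma^{\tau*}\phi_j^{\C}\|_{L^2([0,L])}\neq 0$, so $\mu_j$ is well defined. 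By weak-$*$ compactness it suffices to show that every weak-$*$ limit point $\mu$ of $\{\mu_j\}$ is invariant under the translation flow $\gamma^\tau(t)\mapsto\gamma^\tau(t+s)$; a single periodic orbit is uniquely ergodic, so any invariant probability measure on $\gamma$ is $\tfrac1L\,dt$, and since every subsequential limit equals $\tfrac1L\,dt$ the whole sequence converges.

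Next I would write the matrix-element formula. Using \eqref{STATE} and $\gamma^\tau(t)=g^t z$, for $f\in C(\gamma)$ one has
$$\int_\gamma f\,d\mu_j=\frac{\langle\acal_f\,\phi_j,\phi_j\rangle}{\langle\acal_1\,\phi_j,\phi_j\rangle},\qquad \acal_f:=\int_0^L f(g^t z)\,\bigl(\Phi^\tau_{g^tz}\otimes(\Phi^\tau_{g^tz})^*\bigr)\,dt .$$
By the Boutet de Monvel description of the Poisson--wave kernel recalled in \S\ref{GT}, each $\Phi^\tau_{g^tz}\otimes(\Phi^\tau_{g^tz})^*$ is a complex Fourier integral operator with isotropic relation $\Lambda_{g^tz}\times\Lambda_{g^tz}^*$, so $\acal_f$ is a complex Fourier integral operator whose underlying isotropic relation is the flowout \eqref{FLOWOUT}; because $z$ is $g^t$-periodic this relation is invariant under $G^t\times G^t$, and the principal symbol of $\acal_f$, pushed to $\gamma$, equals $f\,d\nu_0$ for a fixed positive density $d\nu_0$ coming from the geometric symbol of $P^\tau$. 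In particular $\acal_f$ and $\acal_{f\circ g^{-s}}$ have the same order and the same isotropic relation.

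Then I would run the invariance argument. By \eqref{QINV}, $\langle U^{-s}\acal_f U^s\phi_j,\phi_j\rangle=\langle\acal_f\phi_j,\phi_j\rangle$. On the other hand, changing variables $t\mapsto t+s$ in the defining integral and using the Egorov intertwining $U^{-s}\bigl(\Phi^\tau_{g^tz}\otimes(\Phi^\tau_{g^tz})^*\bigr)U^s=\Phi^\tau_{g^{t+s}z}\otimes(\Phi^\tau_{g^{t+s}z})^*+R_{t,s}$, in which $R_{t,s}$ is of strictly lower order and depends smoothly on $(t,s)$ (here one uses that $G^s$ carries the flowout to itself and pulls the symbol $f$ back to $f\circ g^{-s}$), gives
$$\langle\acal_f\phi_j,\phi_j\rangle-\langle\acal_{f\circ g^{-s}}\phi_j,\phi_j\rangle=\langle R^{(s)}\phi_j,\phi_j\rangle,\qquad R^{(s)}:=\int_0^L f(g^tz)\,R_{t,s}\,dt ,$$
with $R^{(s)}$ of order strictly below that of $\acal_f$. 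Dividing by $\langle\acal_1\phi_j,\phi_j\rangle$ and passing to a weak-$*$ convergent subsequence, the lower-order remainder is negligible against the leading term that produces the (nonzero) denominator, so $\mu(f)=\mu(f\circ g^{-s})$ for all $s$ and all $f$; combined with the reduction above this yields $\mu=\tfrac1L\,dt$, i.e. $\{|U_j^\tau|^2\}\to\tfrac1L\,dt$ weakly.

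The main obstacle is entirely in the complex (Boutet de Monvel type) Fourier integral calculus used in the middle two steps: verifying that $\acal_f$ is such an operator with the claimed order, isotropic relation and principal symbol; that composition with $U^s$ obeys Egorov with a remainder of genuinely lower order; and that this remainder may be discarded after dividing by $\langle\acal_1\phi_j,\phi_j\rangle$ --- the last point being where the hypothesis $U_j\not\equiv 0$ on $\gamma$ enters, to keep the denominator positive and of an order dominating the remainder. These estimates are carried out in \cite{Z10} (Proposition 2), of which the present statement is a restatement in the language of matrix elements of Fourier integral operators; the only genuinely dynamical ingredient is the unique ergodicity of a single closed orbit.
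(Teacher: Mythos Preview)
Your proposal is correct and follows essentially the same approach as the paper's sketch: both reduce to an Egorov-type theorem showing that any weak* limit of $\{|U_j^\tau|^2\}$ is $g^t$-invariant on $\gamma$, then invoke unique ergodicity of a periodic orbit to conclude the limit is $\tfrac{1}{L}\,dt$. The paper phrases the Egorov argument in the language of Toeplitz operators associated to the symplectic cone $\Sigma=\R_+\dot\gamma$ (cf.\ \S\ref{ALGEBRA}), while you phrase it in terms of complex Fourier integral operators with isotropic relation equal to the flowout \eqref{FLOWOUT}; these are the same objects, and both accounts defer the analytic details to \cite{Z10}. One small caution: the paper notes that $U^{-s}\bigl(\Phi^\tau_{w}\otimes(\Phi^\tau_{w})^*\bigr)U^{s}$ is a \emph{distorted} coherent-state projector centered at $g^{s}w$, the distortion coming from the Jacobi stability matrix, so your remainder $R_{t,s}$ is not of strictly lower order as an individual operator --- rather, as the paper observes, this leading-order geometric data cancels in the ratio $\langle\acal_f\phi_j,\phi_j\rangle/\langle\acal_1\phi_j,\phi_j\rangle$, which is exactly the normalization you use.
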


The proof is given in \cite{Z10}.  One proves an
 Egorov type theorem in the class of Toeplitz operators. It shows that the weak* limits
must be invariant under $g^t$.  Since they are invariant probability measures on $\gamma$
the only possible limit is $\frac{1}{L} dt.$ 

In \cite{Z8}, an asymptotic formula for the average over the spectrum of the matrix elements
\eqref{STATE}  is given. It involves the stability matrix of the geodesic flow along the orbit
of $z$. However, this data cancels when we take the quotient with $|| \gamma^{\tau *} \phi_j^{\C}\||_{L^2([0, L])}$. 

One may understand Proposition \ref{PERORB} in terms of    time averages of the Fourier integral operator
$\Phi_z^{\tau} \otimes \Phi_z^{\tau*}$.  The 
 leading order  evolution $U^{t}  \Phi^{\tau}_z \otimes (\Phi^{\tau}_z)  U^{-t}$ is somewhat
analogous to the evolution of the standard
 coherent states  \begin{equation} \label{CS} \psi^{\hbar}_{x, \xi}(y) = 2^{-n/4} (2 \pi \hbar)^{- \frac{3n}{4}} e^{- i \frac{\xi \cdot x}{2 \hbar}} 
e^{i \frac{\xi \cdot y}{\hbar}} e^{- \frac{(x - y)^2}{2 \hbar}}  \end{equation} 
on $\R^n$, although it is closer to that of coherent states in the Bargmann-Fock representation. 
The evolution of coherent states has been studied extensively
 in various settings. The model case  of
evolution of 
standard 
coherent states in the Schr\"odinger or Bargmann-Fock representations under linear
Hamilton flows  is discussed
in detail in \cite{CR}, and the evolution of  coherent states on manifolds are discussed in   \cite{PU}. The
case relevant here is that of the Poisson FBI transform to Grauert tubes and the
the Poisson  coherent
states are discussed in \cite{Z8,Z10}. In each case, to  leading order the projection evolves
as that of a distorted coherent state projection
$\Phi^{\tau}_{g^L(z)} \otimes (\Phi^{\tau}_{g^L(z)} )^*$ `centered' on the orbit of $z$. The
shape distortion is due to the Jacobi stability matrix (i.e. $D g^t$ along the orbit).  Thus, 
the time average \eqref{TA} with $T = L_{\gamma}$ (the period of the orbit) is a Fourier
integral operator with complex phase space with wave front set along $\gamma \times \gamma$
and with symbol constant along the orbit. 


It would be interested to find the optimal  analogue of Proposition \ref{PERORB} when
$z$ is not a periodic point, e.g. when it is a regular point for $g^t$ and when $g^t$ is ergodic
on $\partial M_{\tau}$. This case is also studied in \cite{Z10}. The problem is that one
cannot $L^2$ normalize on the entire orbit unless one uses a weight, e.g. the characteristic
function of an interval. But it is apriori possible that the local $L^2$ norms are incommensurable
along the orbit. This cannot happen over fixed compact sets but the norms of the restrictions
could have different orders of magnitudes on parameter intervals for $\gamma: \R \to \partial M_{\tau}$ which are separated by an amount greater than the Ehrenfest time $C \log \lambda$.



\subsection{Coherent states}

A variation on the preceding example is to study matrix elements for  coherent
state projectors, where the coherent states are defined by \eqref{CS}, or more generally
(as in \cite{PU}),
$$\psi^{a}_{x, \xi}(y) = \rho(x - y)  2^{-n/4} (2 \pi \hbar)^{- \frac{3n}{4}} e^{- i \frac{\xi \cdot x}{2 \hbar}} 
e^{i \frac{\xi \cdot y}{\hbar}} \hat{a} ( \frac{(x - y)}{\sqrt{ \hbar}}) $$
for  $a \in \scal(\R)$. The coherent states (or wave packet) transform is defined
by 
$f \to \langle f, \psi_{x, \xi}^{\hbar} \rangle. $ Like \eqref{PTAUME}, it is an  FBI transform but it
is  adapted to the heat kernel rather than the  Poisson kernel and has different inversion
properties. 

As above, 
 $\psi^a_{x, \xi} \otimes \psi^{a*}_{x, \xi}$ is a semi-classical  Fourier integral operator with
complex phase. 
If $(x, \xi)$ is a periodic point of
period $L$ we again consider,
$$\langle \psi^a_{x, \xi} \otimes \psi^{a*}_{x, \xi} \rangle_L : =  \frac{1}{2 L}\int_{-L} ^L  U^{t} (\psi^a_{x, \xi} \otimes \psi^{a*}_{x, \xi} ) U^{-t} dt. $$ As above, 
$\langle \langle \psi^a_{x, \xi} \otimes \psi^{a*}_{x, \xi} \rangle_L \psi_j, \psi_j \rangle = 
\langle \psi^a_{x, \xi} \otimes \psi^{a*}_{x, \xi}  \psi_j, \psi_j \rangle. $
To leading order in $\hbar$ it is the same as 
$$\langle \psi^a_{x, \xi} \otimes \psi^{a*}_{x, \xi} \rangle_L \simeq  \frac{1}{2 L}\int_{-L} ^L   \psi^{a_t}_{G^t(x, \xi)} \otimes \psi^{a_t*}_{G^t(x, \xi)}  dt, $$
where $a_t$ is a deformed symbol whose principal part is $a_0 \circ dg^t$ (cf. \cite{CR,PU}).


As in the setting of analytic continuation to Grauert tubes, we restrict  the diagonal matrix elements
$|\langle \langle \psi^a_{x, \xi} \otimes \psi^{a*}_{x, \xi} \rangle_L \psi_j, \psi_j \rangle |^2 $
to the orbit of $(x, \xi)$ and view them as a sequence of  positive measures. When $(x, \xi)$
is a periodic point, we restrict to the pull back to $[0, L]$ and divide by the mass to obtain
a sequence of probability measures on  $[0, L]$. 
The weak* limits are then constant multiples of Lebesgue measure as in Proposition \ref{PERORB}.

.

\section{Proof of Proposition \ref{SA}}

This additional symmetry of Proposition \ref{SA} comes from the fact  that $I^0(M \times M, C)$ is a right and left module over
$\Psi^0(M)$ with respect to composition of operators, and that on the symbol level the left and right compositions commute
We thus consider the case where $\rho_j(AF) \sim \rho_j(FA)$
where $A \in \Psi^0(X)$, e.g. if  $F^* = F$ as in the cases
$\cos t \sqrt{\Delta}$ or $T_p$, or if  $\phi_j$ is an eigenfunction
of $F$. We restate Proposition \ref{SA} and refer to the diagram \eqref{DIAGRAMintro}.

\begin{prop} \label{LR} Suppose that $\rho_j(AF) \sim \rho_j(FA)$ and   that the canonical relation $C$
is invariant under the geodesic flow $G^t$. Then the limit
measures are among the $G^t$-invariant signed  measures of mass
$\leq 1$ on $C$ satisfying $\rho_{\infty}(\pi_X^* a \sigma) =
\rho_{\infty}(\pi_Y^* a \sigma). $
\end{prop}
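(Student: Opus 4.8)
\textbf{Proof plan for Proposition \ref{LR}.}
The plan is to combine the already-established facts that (i) any weak$^*$ limit $\rho_\infty$ is a signed measure on $C$ (Proposition \ref{w*L}), (ii) invariance of $C$ under $G^{-t}\times G^t$ forces $\rho_\infty$ to be $G^t$-invariant (Corollary \ref{2}), with the new input $\rho_j(AF)\sim\rho_j(FA)$ for $A\in\Psi^0(M)$. The only thing left to extract is the symmetry relation $\rho_\infty(\pi_X^*a\,\sigma_F)=\rho_\infty(\pi_Y^*a\,\sigma_F)$. First I would recall the symbol calculus for the $\Psi^0$-bimodule structure on $I^0(M\times M,C)$: if $A\in\Psi^0(M)$ has principal symbol $a\in C^\infty(S^*M)$ and $F\in I^0(M\times M,C)$ has symbol $\sigma_F$ (a $\half$-density on $C$, here identified with a scalar via the fixed graph half-density), then $AF\in I^0(M\times M,C)$ with principal symbol $(\pi_X^*a)\,\sigma_F$, while $FA\in I^0(M\times M,C)$ with principal symbol $(\pi_Y^*a)\,\sigma_F$. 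This is Hörmander's clean-composition formula (Theorem 25.2.3 of \cite{HoI-IV}) specialized to composing a $\Psi$DO (canonical relation the diagonal) with a canonical graph: the symbol of the composite is the pullback of $a$ from the appropriate side of the graph, so the two orders of composition differ precisely by replacing $\pi_X$ with $\pi_Y$ in the pullback.

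Next I would pass to the limit. By hypothesis $\rho_j(AF)-\rho_j(FA)\to 0$; more precisely, since $AF-FA\in I^0(M\times M,C)$ and its principal symbol is $(\pi_X^*a-\pi_Y^*a)\sigma_F$, subtracting off a lower-order (hence compact, hence asymptotically negligible on the $\phi_j$) remainder shows that along the subsequence defining $\rho_\infty$ we have $\rho_\infty\big((\pi_X^*a)\sigma_F\big)=\rho_\infty\big((\pi_Y^*a)\sigma_F\big)$. Since $\rho_\infty$ depends only on the symbol (Proposition \ref{w*L}, the identification \eqref{rhoinf}), and since $a$ ranges over all of $C^\infty(S^*M)$ while $\sigma_F$ ranges over all symbols of elements of $I^0(M\times M,C)$, this is exactly the asserted identity at the level of the limit measure. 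Rephrasing measure-theoretically: writing $\rho_\infty(\sigma)=\int_C \sigma\, d\mu$ for the limit measure $\mu$ on $C$, the relation reads $\int_C \pi_X^*a\,\sigma\,d\mu=\int_C \pi_Y^*a\,\sigma\,d\mu$ for all $a$ and all $\sigma$; absorbing $\sigma$ into the (signed) measure $\sigma\,d\mu$ and using that $\sigma$ is arbitrary, this says $(\pi_X)_*\mu=(\pi_Y)_*\mu$ as measures on $S^*M$, which is the statement of Proposition \ref{SA}. The $G^t$-invariance and mass bound $\le 1$ are inherited verbatim from Corollary \ref{2}.

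The main obstacle is bookkeeping about symbols rather than anything deep: one must be careful that the principal symbol of $AF$ really is $\pi_X^*a\cdot\sigma_F$ and not $\pi_Y^*a\cdot\sigma_F$ (this depends on the convention for which factor of $T^*M\times T^*M$ the operator "acts on," and on the orientation conventions built into the graph half-density), and that the half-density factors from the clean composition formula cancel against the fixed nowhere-vanishing half-density used to trivialize $\sigma_F$ as a scalar, so that the identity is genuinely about scalar functions pulled back by the two projections. A secondary point is to confirm that $\rho_j(AF)\sim\rho_j(FA)$ in the two motivating cases: when $F^*=F$ one uses $\langle AF\phi_j,\phi_j\rangle=\langle F\phi_j, A^*\phi_j\rangle=\overline{\langle A^*\phi_j, F\phi_j\rangle}$ together with reality of $\phi_j$ and self-adjointness of $F$ to reduce to $\langle FA\phi_j,\phi_j\rangle$ modulo lower order; when $\phi_j$ is an eigenfunction of $F$ with $F\phi_j=e^{i\theta_j}\phi_j$ one has $\rho_j(AF)=e^{i\theta_j}\rho_j(A)$ and $\rho_j(FA)=e^{-i\theta_j}\rho_j(A)$... more carefully $\langle AF\phi_j,\phi_j\rangle=e^{i\theta_j}\langle A\phi_j,\phi_j\rangle$ and $\langle FA\phi_j,\phi_j\rangle=\langle A\phi_j,\overline{e^{i\theta_j}}\phi_j\rangle=e^{i\theta_j}\langle A\phi_j,\phi_j\rangle$ since $F$ unitary gives $F^*\phi_j=e^{-i\theta_j}\phi_j$, so the two agree exactly. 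These verifications are routine; the substance of the proposition is simply that the $\Psi^0$-bimodule structure on $I^0(M\times M,C)$ is symmetric at the symbol level, which then transfers directly to the weak$^*$ limit.
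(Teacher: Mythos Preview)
Your argument is correct and follows the same core idea as the paper: compute the principal symbols $\sigma_{AF}=(\pi_X^*a)\sigma_F$ and $\sigma_{FA}=(\pi_Y^*a)\sigma_F$ via the clean-composition formula, then pass to the weak$^*$ limit using Proposition~\ref{w*L} and Corollary~\ref{2}. The one minor difference is that you invoke the stated hypothesis $\rho_j(AF)\sim\rho_j(FA)$ directly, whereas the paper's written proof instead computes $\langle AF\phi_j,\phi_j\rangle=\overline{\langle F^*A^*\phi_j,\phi_j\rangle}=\overline{\langle FA^*\phi_j,\phi_j\rangle}$ using $F^*=F$ and reality of $\sigma_F$; your route is arguably more faithful to the proposition as stated, but the substance is identical.
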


\begin{cor} Let $\iota: C \to C$ be the involution $\iota(\xi,
\eta) = (\eta, \xi)$. Let $\nu_{\infty}$ be the limit on $\Psi^0$.
Then $\iota^* \rho_{\infty} = \rho_{\infty}$. Equivalently,
$$\pi_{1*} \rho_{\infty} = \pi_{2*} \rho_{\infty} = \nu_{\infty}.
$$\end{cor}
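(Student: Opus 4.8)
The plan is to deduce Proposition \ref{LR} and its Corollary from Proposition \ref{1} (or rather Corollary \ref{2}, since we are assuming $C$ is $G^t$-invariant) by exploiting the bimodule structure of $I^0(M\times M,C)$ over $\Psi^0(M)$. Concretely, for $A\in\Psi^0(M)$ and $F\in I^0(M\times M,C)$, both $AF$ and $FA$ lie in $I^0(M\times M,C)$, and at the level of principal symbols one has $\sigma_{AF}=(\pi_X^*\sigma_A)\,\sigma_F$ and $\sigma_{FA}=(\pi_Y^*\sigma_A)\,\sigma_F$; this is the standard symbol calculus for composing a $\Psi$DO with an FIO whose relation is a canonical graph (see \cite{HoI-IV}), where the $\Psi$DO symbol is pulled back via the appropriate projection in the diagram \eqref{DIAGRAMintro}. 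The key input is the hypothesis $\rho_j(AF)\sim\rho_j(FA)$, meaning $\rho_j(AF)-\rho_j(FA)\to 0$; I would first note why this holds in the stated cases: if $F^*=F$ and $A^*=A$ then $\rho_j(AF)=\langle AF\phi_j,\phi_j\rangle=\langle F\phi_j,A\phi_j\rangle=\overline{\langle A\phi_j,F\phi_j\rangle}=\overline{\rho_j(FA)^{\,-}}$ up to conjugation bookkeeping, and reality of $\phi_j$ closes the gap; alternatively, if $\phi_j$ is an eigenfunction of $F$ with $F\phi_j=\mu_j\phi_j$, then $\rho_j(AF)=\mu_j\rho_j(A)=\rho_j(FA)$ exactly.

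Granting this, the argument is a two-line passage to the limit. Let $\rho_\infty$ be a weak* limit of $\rho_j$ on $I^0(M\times M,C)$; by Corollary \ref{2} it is a $G^t$-invariant signed measure of mass $\le 1$ on $C$, and by Proposition \ref{w*L} it depends only on $\sigma_F$, so we write $\rho_\infty(F)=\rho_\infty(\sigma_F)$. Then for any $A\in\Psi^0(M)$,
\begin{equation}
\rho_\infty(\pi_X^*\sigma_A\cdot\sigma_F)=\lim_j\rho_j(AF)=\lim_j\rho_j(FA)=\rho_\infty(\pi_Y^*\sigma_A\cdot\sigma_F),
\end{equation}
where in the middle we use $\rho_j(AF)-\rho_j(FA)\to 0$ and on the ends we use the symbol composition formulas together with Proposition \ref{w*L} to identify the limits. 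Since $F$ ranges over all of $I^0(M\times M,C)$, $\sigma_F$ ranges over (a dense set of) half-densities on $C$, so this says exactly $\rho_\infty(\pi_X^*a\,\sigma)=\rho_\infty(\pi_Y^*a\,\sigma)$ for all $a\in C^\infty(S^*M)$ and all symbols $\sigma$, which is the assertion of Proposition \ref{LR}.

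For the Corollary I would specialize: take $F=F_0$ a fixed elliptic element of $I^0(M\times M,C)$ with nowhere-vanishing symbol (these exist for a local canonical graph, e.g.\ the quantization of the graph half-density), and normalize so $\sigma_{F_0}\equiv 1$ in suitable trivializations; then $\rho_\infty(\pi_X^* a)=\rho_\infty(\pi_Y^* a)$ for all $a$, i.e.\ $(\pi_X)_*\rho_\infty=(\pi_Y)_*\rho_\infty$ as measures on $S^*M$. Writing $\iota(\xi,\eta)=(\eta,\xi)$ and noting $\pi_Y=\pi_X\circ\iota$ on $C$ when $C$ is symmetric (which holds here since, under the $G^t\times G^t$-invariance and the module symmetry, the limit lives on the symmetric part), we get $\iota^*\rho_\infty=\rho_\infty$; and the common pushforward is precisely the quantum limit $\nu_\infty$ on $\Psi^0(M)$ obtained by restricting to the diagonal, since composing $F_0$ with $A$ and evaluating recovers $\rho_j(A)$ up to the elliptic factor. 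The main obstacle is bookkeeping rather than conceptual: one must be careful that $\pi_X,\pi_Y$ are only \emph{branched} covers, so $\sigma_F$ is a half-density on $C$ and the identifications $\sigma_{AF}=\pi_X^*\sigma_A\cdot\sigma_F$ hold on the smooth part of $C$ and extend by continuity, and one must check that the Maslov factor (which we are ignoring throughout) genuinely drops out of the identity $\rho_\infty(\pi_X^* a\,\sigma)=\rho_\infty(\pi_Y^* a\,\sigma)$ because $a$ is scalar and pulls through it — this is the one place where care is needed, but it causes no real trouble since $\pi_X^* a$ and $\pi_Y^* a$ multiply the \emph{same} symbol $\sigma_F$.
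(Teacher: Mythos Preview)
Your proposal is correct and follows essentially the same route as the paper. Both arguments exploit the bimodule structure of $I^0(M\times M,C)$ over $\Psi^0(M)$ together with the symbol identities $\sigma_{AF}=\pi_X^*\sigma_A\cdot\sigma_F$ and $\sigma_{FA}=\pi_Y^*\sigma_A\cdot\sigma_F$, then pass to the limit; the paper's version differs only cosmetically in that it rederives $\rho_j(AF)\sim\rho_j(FA)$ on the spot via the chain $\langle AF\phi_j,\phi_j\rangle=\overline{\langle F^*A^*\phi_j,\phi_j\rangle}=\overline{\langle FA^*\phi_j,\phi_j\rangle}$ (using $F^*=F$ and $\overline{\sigma_F}=\sigma_F$), whereas you invoke the stated hypothesis directly and then fill in the Corollary, which the paper leaves implicit.
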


In the case where $C$ is the graph of a canonical transformation
$\chi$, this says that if we identify $\rho_{\infty}$ with a
measure on the domain, then $\chi^* \rho_{\infty} =
\rho_{\infty}$. In the general case of a local canonical graph, it
does not say that $\rho_{\infty}$ is invariant  under the correspondence
but rather than it is invariant under each local branch. \medskip

\begin{proof}

  If we fix one (elliptic) element 
$F  \in I^0(M \times M, C)$ (i.e. with nowhere vanishing symbol) then   $I^0(M \times M, C)$ is
spanned by sums of operators $A F B$ with $A, B \in \Psi^0(M)$. Thus $\rho_j$ induces functionals on 
$\Psi^0(M) \times \Psi^0(M)$ of the form
\begin{equation}  \rho_j (A F B): = \langle A F B  \phi_j, \phi_j
\rangle, \end{equation} which can be normalized to have  mass one by
\begin{equation} \hat{\beta}_j (A F B): = \frac{ \langle A F B  \phi_j, \phi_j
\rangle}{\langle F \phi_j, \phi_j \rangle}, \;\;\; \mbox{if}\;\; F \phi_j \not= 0. \end{equation} Thus, we can
define right and left functionals
$$\beta_j^L (A) = \rho_j(AF), \;\;\ \beta_j^R(A) = \rho_j(FA) \;\; \mbox{on} \;\; \Psi^0(M)$$
and obtain signed limit measures  on $S^* M$ from the weak* limits. Then $\{\beta_j^L\}$ 
and $\{\beta_j^R\}$ have weak* limits
along the same subsequences and the limits are the same.

Then
$$\rho_{\infty}(\pi_X^* a \sigma) \sim \rho_j(A F) = \langle A F \phi_j, \phi_j \rangle = \overline{\langle F^*
A^* \phi_j, \phi_j \rangle} = \overline{\langle F A^* \phi_j,
\phi_j \rangle} \sim \pi_Y^* a \sigma $$ since $\sigma_{A^*} \sim
\overline{\sigma_A}$ and since $\overline{\sigma_F} = \sigma_F.$

\end{proof}

\begin{rem} When $F$ is associated to a symplectic diffeomorphism $\chi$ of $T^* M \backslash 0$, then the left and right functionals are
related as follows:

\begin{equation} \label{LEFTRIGHT} \beta_j^R(A) = \rho_j(FA) = \rho_j(FAF^{-1} F)=\beta_j^L (F^{-1} A F). \end{equation}
By Egorov's theorem $F^{-1} A F$ is a pseudo-differential operator with symbol $\sigma_A \circ \chi$, and so 
along any sequence with a weak* limit
\begin{equation} \label{LRLIMIT} \beta_{\infty}^R(\sigma_A)  = \beta_{\infty}^L(\sigma_A \circ \chi). \end{equation}
\end{rem}

\section{\label{HECKE} Isometries and Hecke operators}

In this section we consider the weak* limits in Proposition \ref{w*L}  and Corollary \ref{2} in the case of isometries
and sums of isometries known as  Hecke operators. By lifting the weak* limit problem to the canonical relation
instead of $S^* M$ and using Propositon \ref{LR},  we   obtain a new invariance principle. We also make concrete identifications of the canonical 
relations. 

In discussing the canonical relations of Hecke operators we make use of the well-known co-tangent lift
 $f_{\sharp}$ of a diffeomorphism $f: X_1 \to X_2$,  defined by
\begin{equation} \label{SHARP} f_{\sharp}(x_1, \xi_1) = (x_2, \xi_2), \;\; \mbox{with}\;\;
\left\{ \begin{array}{l} x_2 = f(x_1), \\ 
\xi_1 = df_{x_1}^* \xi_2 \in T^*_{x_1} X_1, \end{array} \right.  \end{equation}
where $$(df_{x_1})^* : T^*_{x_2} X_2 \to T_{x_1}^* X_1, $$
so that
$$f_{\sharp} |_{T^*_{x_1}} = (d f_{x_1})^{* -1}. $$
Then $f_{\sharp}$ is a symplectic diffeomorphism.

\subsection{\label{ISOMsect} Isometries: $F = T_g$}

We begin with  the simplest example where $F = T_g$
is translation by an isometry $T_g f(x) = f(g x)$ of a Riemannian manifold $(M,
ds^2)$. 
The canonical relation is the graph $\Gamma_g$ of the lift \eqref{SHARP} of
$T_g$ to $T^* M$.  Since $g$ is an isometry,
  $T_g$ commutes with $\Delta$ and we consider  thier  joint eigenfuntions with  $T_g \phi_j = e^{i \theta_j} \phi_j$ for some $e^{i \theta_j}
\in S^1$ (such as spherical harmonics $Y^{\ell}_m$ if $g$
is a rotation around the vertical axis).  The relevant space of
operators  $I^0(M \times M, \Gamma_g)$ is 
spanned by sums of operators $A T_g B$ with $A, B \in \Psi^0(M)$
and so it suffices to consider the functionals 
\begin{equation} \label{HATRHO} \hat{\rho}_j (AT_g B)  =  \langle A T_g B  \phi_j, \phi_j
\rangle = \langle A T_g B T_g^{-1} T_g  \phi_j, \phi_j
\rangle.  \end{equation}  Note that $A T_g B T_g^{-1} \in \Psi^0(M)$.    As one sees from the case $A = B = I$,
a  subsequence $\hat{\rho}_{j_k}$ can 
 only have a unique  weak* limit if
the associated eigenvalues $e^{i \theta_{j_k}}$ have a limit. To define matrix elements
with larger  subsequential limits, we re-normalize the functionals by 
\begin{equation} \label{METG} \rho_j (A T_g B): = \frac{ \langle A T_g B  \phi_j, \phi_j
\rangle}{\langle T_g \phi_j, \phi_j \rangle} = \langle A T_g B T_g^{-1} \phi_j, \phi_j
\rangle. \end{equation}
Here, we assume $\langle T_g \phi_j, \phi_j \rangle \not= 0$.  The reader may prefer
the $\hat{\rho}_j$; the methods and results apply equally to them and to \eqref{METG}.

We could regard the weak* limits as measures on $S^* M$ or on  $S \Gamma_g$, the graph of the lift of $T_g$
on $S^* M$, which  has the form $\{(\zeta,
g \cdot \zeta): \zeta \in S^* M\}$. To illustrate the lift, we  write the quantum
limit as \begin{equation}\label{FORMULA}  \rho_{\infty} (A T_g B)
= \int_{S^* M} a(\zeta) b(g \cdot \zeta) d \nu(\zeta, g \cdot
\zeta).
\end{equation}

\begin{prop} \label{ISOM} Let $g \in Isom(M, ds^2)$ and let $\nu_g$ be a weak* limit
measure for the functionals $\rho_j(F) = \frac{\langle F \phi_j, \phi_j \rangle}{T_g \phi_j,\phi_j \rangle}$
on $I^0(M \times M, C_g).$ Suppose that $T_g \phi_j = e^{- i \theta_j} \phi_j$.
Then under the identification $S^* M \to C_g, \zeta \to (\zeta, g \cdot \zeta)$,
$\nu = \pi^* \bar{\nu}$ is a signed measure of mass $\leq 1$ on $S^* M$ which is invariant under both
$G^t$ and $g$. \end{prop}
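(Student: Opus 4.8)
\medskip

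\noindent\emph{Proof strategy.} The plan is to reduce the matrix elements on $I^0(M\times M,C_g)$ to ordinary pseudo-differential quantum limits, using that $\phi_j$ is a \emph{joint} eigenfunction of $\Delta$ and $T_g$. First I would recall that $I^0(M\times M,C_g)$ is spanned by operators $AT_gB$ with $A,B\in\Psi^0(M)$, that $T_g$ is unitary with $T_g^*=T_g^{-1}$, and that $T_g\phi_j=e^{-i\theta_j}\phi_j$, so $T_g^{-1}\phi_j=e^{i\theta_j}\phi_j$. Since $T_g$ is a Fourier integral operator associated with a \emph{global} canonical graph --- the graph of the cotangent lift $g_\sharp$ of \eqref{SHARP} --- conjugation by $T_g$ is an honest Egorov operation, so $B':=T_gBT_g^{-1}\in\Psi^0(M)$ with $\sigma_{B'}=\sigma_B\circ g_\sharp$ and $A'':=T_g^{-1}AT_g\in\Psi^0(M)$ with $\sigma_{A''}=\sigma_A\circ g_\sharp^{-1}$ (the precise direction of $g_\sharp$ being a convention, immaterial below since $g^{-1}$ is also an isometry). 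A short computation using the normalization \eqref{METG} then gives
\begin{equation*}
\rho_j(AT_gB)=\nu_j(AB')=\nu_j(A''B),\qquad \nu_j(A):=\langle A\phi_j,\phi_j\rangle,
\end{equation*}
the first equality because $AT_gBT_g^{-1}=AB'$, the second because $AT_gB=T_g(A''B)$ together with $T_g^*\phi_j=e^{i\theta_j}\phi_j$. Consequently every weak$^*$ limit of $\{\rho_j\}$ arises from a weak$^*$ limit $\mu$ of the usual diagonal functionals $\{\nu_j\}$ along the same subsequence.

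\medskip

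\noindent Granting this, the elementary assertions are immediate. The functional $\mu$ is positive on $C(S^*M)$ of mass $\le 1$ --- in fact of mass one, since $\rho_j(T_g)=\nu_j(I)=1$ --- and the standard Egorov identity $\nu_j(U^{-t}AU^t)=\nu_j(A)$ makes it $G^t$-invariant. Under the graph parametrization $\pi\colon S^*M\to C_g$, $\zeta\mapsto(\zeta,g_\sharp\zeta)$, the symbol calculus gives $\pi^*\sigma_{AT_gB}=\sigma_A\cdot(\sigma_B\circ g_\sharp)$, up to the canonical nonvanishing half-density and the Maslov factor (ignored throughout), so that $\rho_\infty(F)=\int_{S^*M}\pi^*\sigma_F\,d\mu$; this identifies the limit with $\nu=\pi^*\bar\nu=\mu$, a signed (in fact positive) measure of mass $\le 1$ on $S^*M$, and with its $G^t$-invariance. (Alternatively this follows from Corollary \ref{2}: $C_g$ is invariant under the geodesic flow, since the cotangent lift of an isometry commutes with $G^t$; indeed $U^{-t}T_gU^t=T_g$, as $T_g$ commutes with $\Delta$.)

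\medskip

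\noindent The one new point is the $g$-invariance, $(g_\sharp)_*\nu=\nu$, and for it I would invoke Proposition \ref{LR} (equivalently Proposition \ref{SA}) with $F=T_g$. Its hypothesis $\rho_j(AF)\sim\rho_j(FA)$ holds here \emph{with equality}: $\rho_j(AT_g)=\langle AT_gT_g^{-1}\phi_j,\phi_j\rangle=\nu_j(A)$, while $\rho_j(T_gA)=\langle T_gAT_g^{-1}\phi_j,\phi_j\rangle=\langle AT_g^{-1}\phi_j,T_g^{-1}\phi_j\rangle=\nu_j(A)$, using $T_g^*=T_g^{-1}$ and $T_g^{-1}\phi_j=e^{i\theta_j}\phi_j$; and $C_g$ is $G^t$-invariant. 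Hence Proposition \ref{LR} gives $\rho_\infty(\pi_X^*a\,\sigma)=\rho_\infty(\pi_Y^*a\,\sigma)$ for all $a\in C(S^*M)$, and since $\pi_Y=g_\sharp\circ\pi_X$ on the graph $C_g$ this says $(\pi_X)_*\bar\nu=(g_\sharp)_*(\pi_X)_*\bar\nu$, i.e. $(g_\sharp)_*\nu=\nu$. One can also bypass Proposition \ref{LR}: passing to the limit in $\rho_j(AT_gB)=\nu_j(AB')=\nu_j(A''B)$ gives $\int_{S^*M}\sigma_A\,(\sigma_B\circ g_\sharp)\,d\mu=\int_{S^*M}(\sigma_A\circ g_\sharp^{-1})\,\sigma_B\,d\mu$, and setting $A=I$ yields $\int b\circ g_\sharp\,d\mu=\int b\,d\mu$ for every symbol $b$, hence for all $b\in C(S^*M)$.

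\medskip

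\noindent I expect no serious obstacle. The real content is the single identity $\rho_j(AT_gB)=\nu_j(AB')=\nu_j(A''B)$, which rests on $T_g$ being unitary --- so that $T_g^*=T_g^{-1}$ and conjugation by $T_g$ preserves $\Psi^0(M)$, which is where ``isometry'' rather than a general local canonical graph is used --- and on $\phi_j$ being a \emph{simultaneous} eigenfunction of $\Delta$ and $T_g$, which is what permits sliding $T_g$ past the $\phi_j$'s and returning to $\Psi^0(M)$. Everything else --- positivity, the mass bound, $G^t$-invariance, and the identification of $\nu$ with $\pi^*\bar\nu$ --- is the standard quantum-limit package already established for Propositions \ref{w*L} and \ref{1}.
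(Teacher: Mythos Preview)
Your argument is correct and is essentially the same as the paper's: both reduce to setting $A=I$ (resp.\ $B=I$) in $\rho_j(AT_gB)$, use the $T_g$-eigenfunction property to identify the result with the ordinary diagonal functional $\nu_j$ applied to a conjugated operator, and then read off $g$-invariance from the equality of the two resulting integrals. The paper phrases this as $\pi_*\nu=\rho_*\nu$ on the graph $C_g$, while you write out the Egorov conjugation $B'=T_gBT_g^{-1}$, $A''=T_g^{-1}AT_g$ explicitly and obtain $\nu_j(B')=\nu_j(B)$; these are the same computation.
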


\begin{proof} We  have,
$$\int_{SC_g}  \sigma_{A T_g B } d\nu = \lim_{k \to \infty}  \frac{\langle A T_g B \phi_{j_k}, \phi_{j_k} \rangle}{\langle T_g \phi_{j_k}, \phi_{j_k} \rangle}. $$

Using $A = I$ and then $B =I $ we get  $\pi_* \nu = \bar{\nu},
\rho_* \nu = g_* \pi_* \nu = g_* \bar{\nu} - \nu. $ In the
notation (\ref{FORMULA}), this says, $$ \int_{S^* M} a(\zeta)  d
\nu(\zeta, g \cdot \zeta) = \int_{S^* M} a(\zeta) d
\omega(\zeta),$$ and $$ \int_{S^* M} b(\zeta)  d \omega(\zeta) =
\int_{S^* M}
 b(g \cdot \zeta) d \nu(\zeta, g \cdot \zeta) = \int_{S^* M}
 b(\zeta) d \nu(g^{-1} \zeta,  \zeta),$$
 which implies
 $$ \int_{S^* M} a(\zeta)  d
\nu(\zeta, g \cdot \zeta)  = \int_{S^* M} a(\zeta)  d
\nu(g^{-1}\zeta, \zeta), $$ i.e. $ d \nu(\zeta, g \cdot \zeta) = d
\nu(g^{-1} \zeta,  \cdot \zeta)$ or
 \begin{equation}\label{FORMULAa}   \int_{S^* M} a(\zeta) d \omega(\zeta) = \int_{S^*
M} a (g \zeta)  d \omega(\zeta).
\end{equation}

\end{proof}
\subsection{\label{SPHERE} Hecke operators on spheres}

The now consider the interesting example of self-adjoint Hecke operators on $S^n$, i.e. sums
\begin{equation} \label{T} T f(x) = \frac{1}{2d} \sum_{j = 1}^d (f(g_j x) + f(g_j^{-1} x) ), \;\;\; g_j \in SO(n + 1)  \end{equation}
of isometries on $S^n$. Note that $T$ is normalized so that $T 1 = 1$ and $||T|| = 1$. They are 
a helpful guide to Hecke correspondences on hyperbolic quotients
in the next section, and have a considerable literature of their own (see \cite{LPS} and subsequent articles).
The main result of this section,  Proposition \ref{HECKESN}, gives a new invariance principle for the quantum
limits of joint eigenfunctions of $T$ and $\Delta$. A key point is to view the limit measures as measures
on the canonical relation, which we may identify with $\bigcup_{j = 1}^k S^* S^n $,  rather than on $S^* S^n$.

A Hecke operator \eqref{T} is a discrete   Radon  transform $T = \rho_* \pi^*$
 corresponding to the trivial cover,

\begin{equation}\label{DIAGRAM} \begin{array}{ccccc} & & \bigcup_{j = 1}^{2d} S^n& &  \\ & & & & \\
& \pi \swarrow & & \searrow \rho & \\ & & & & \\
S^n & & & & S^n,
\end{array}
\end{equation} where $\pi(x, j) = x$ and $\rho (x, j) = g_j x. $
The canonical relation is the cotangent lift of the graph
$$\gcal_{T_{g_j}} = \{(z, g_j z): x \in S^n)\}$$ of the    isometric correspondence
$$C_T (x) = \{g_1 x, \dots, g_{2d} x\}. $$
 Thus we have a second
diagram

\begin{equation}\label{DIAGRAMb} \begin{array}{ccccc} & & \gcal_{T_g} \subset
S^n
\times S^n & &  \\ & & & & \\
& \pi \swarrow & & \searrow \rho & \\ & & & & \\
S^n  & & & & S^n .
\end{array}
\end{equation}
The graph of the
correspondence is an immersed submanifold of $S^n \times S^n$ with
self-intersections when the graphs of the individual isometries
$g_j$ intersect. This is clear since  $\iota(x, j) = \iota(x', j') \implies x =
x'$ and $g_j x = g_{j'} x$ or $g_{j'}^{-1} g_j x = x.$ 
The diagrams \eqref{DIAGRAM} and \eqref{DIAGRAMb} and are related as follows. 

\begin{lem}\label{Sparam}

The map
$$\iota: \bigcup_{j = 1}^{2d} S^n \to \gcal_{T_g}$$ defined by  $$\iota(\hat{x}) =
(\pi(\hat{x}),  \rho(\hat{x})), \;\;\; \mbox{or} \;\;\iota(x, j) =
(x, g_j x)
$$
is an immersion whose image is the graph. 

\end{lem}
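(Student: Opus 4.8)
The plan is to prove that the map $\iota: \bigcup_{j=1}^{2d} S^n \to \gcal_{T_g}$ given by $\iota(x,j) = (x, g_j x)$ is a smooth immersion with image equal to the graph of the correspondence $C_T$. First I would observe that on each connected component $\{(x,j) : x \in S^n\}$, the map restricts to $x \mapsto (x, g_j x)$, which is the graph map of the single isometry $g_j$; since $g_j \in SO(n+1)$ is a diffeomorphism of $S^n$, this restricted map is a smooth embedding of $S^n$ onto the graph $\gcal_{T_{g_j}} = \{(x, g_j x) : x \in S^n\} \subset S^n \times S^n$. In particular its differential at each point is injective, so $\iota$ is an immersion on each component, hence an immersion on the disjoint union.

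Next I would identify the image: by construction $\iota\bigl(\bigcup_{j=1}^{2d} S^n\bigr) = \bigcup_{j=1}^{2d} \gcal_{T_{g_j}}$, and this union is precisely $\gcal_{T_g}$, the graph of the multivalued correspondence $C_T(x) = \{g_1 x, \dots, g_{2d} x\}$, as defined just above the lemma. So the image is exactly the graph. The only remaining point is that $\iota$ need not be injective globally: as noted in the surrounding text, $\iota(x,j) = \iota(x',j')$ forces $x = x'$ and $g_j x = g_{j'} x$, i.e.\ $g_{j'}^{-1} g_j$ fixes $x$, which can happen for $j \neq j'$ on the (lower-dimensional, generically empty) set where two of the isometry graphs cross. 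This is why one only claims an immersion onto an immersed submanifold with self-intersections, rather than an embedding; I would simply remark that the self-intersection locus is where the individual graphs $\gcal_{T_{g_j}}$ meet, consistent with the discussion preceding the lemma.

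The proof is essentially a bookkeeping exercise: the substantive content is the trivial observation that the graph map of a diffeomorphism is an embedding, applied componentwise. There is no real obstacle; the main thing to be careful about is phrasing — distinguishing "immersion onto the image as an immersed submanifold" from "embedding," and making clear that $\gcal_{T_g}$ is being regarded as the immersed submanifold $\bigcup_j \gcal_{T_{g_j}}$ with its possible self-crossings, so that the statement "image is the graph" is literally correct. I would also note that the diagram \eqref{DIAGRAM} factors through $\iota$ in the sense that $\pi = \pi \circ \iota^{-1}$ and $\rho = \rho \circ \iota^{-1}$ on each sheet (abusing notation), which is the compatibility between diagrams \eqref{DIAGRAM} and \eqref{DIAGRAMb} that the lemma is meant to record.
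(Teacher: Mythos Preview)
Your proposal is correct and follows the same approach as the paper. The paper in fact does not give a separate proof of this lemma: the key observation (that $\iota(x,j)=\iota(x',j')$ forces $x=x'$ and $g_{j'}^{-1}g_j x = x$, so the only failure of injectivity is at crossings of the individual graphs) is stated in the text immediately preceding the lemma, and the lemma itself is left as evident; your write-up simply fleshes this out by noting that on each component $\iota$ is the graph map of a diffeomorphism, hence an embedding, and that the union of these graphs is by definition $\gcal_{T_g}$.
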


The canonical relation $C_T \subset T^*(S^n \times S^n) $ of the Hecke operator $T$ is the graph of the sum of  cotangent lifts of the invidivual
isometries,  and the cotangent lift of the map in Proposition \ref{Sparam} gives a parametrization
\begin{equation} \label{iotaSn} \iota: \bigcup_{j = 1}^{2d} S^* S^n \to  C_T$$ defined by  $$\iota(\hat{x}) =
(\pi_{\sharp}(\hat{x}), \xi,  \rho(\hat{x}), \xi), \;\;\; \mbox{or} \;\;\iota_{\sharp}(x,  \xi, j) =
(x, g_j x, \xi, Dg_j^{* -1} \xi). 
\end{equation}
Thus, we may consider quantum limit measures as  measures on 
$\bigcup_{j = 1}^{2d} S^* S^n $, i.e. as a finite set $\{\half( \nu_j + \overline{\nu_j}\}_{j = 1}^{d}$ of real signed  measures on $S^* S^n$.

When $\langle T u_j, u_j \rangle \not= 0$, we can define the
Wigner functionals by the following normalized matrix elements (cf. \eqref{METG})
\begin{equation}\label{RHOHECKE}  \rho_{j, T} (A T B) = \frac{ \langle A T B
u_j, u_j \rangle}{\langle T u_j, u_j \rangle}, \;\; A, B \in
\Psi^0(S^n). \end{equation}
We denote by $\hat{\rho}_j$ the other normalization as in \eqref{HATRHO}.

We can also define the Wigner functionals for the individual
isometries $g_j$:
\begin{equation} \label{rhojgk} \rho_{j, g_k} (A T B) = \frac{ \langle A T_{g_k} B
u_j, u_j \rangle}{\langle T_{g_k} u_j, u_j \rangle}, \;\; A, B \in
\Psi^0(S^n). \end{equation}
Note that $$ \rho_{j, g_k^{-1}} (A T B) = \overline{\rho_{j, g_k}(B^* T A^*)}. $$

Exactly as in  Proposition \ref{ISOM}, the weak *  limits of the $\rho_{j, g_k} $ for each $k$ are linear functionals  of the form
\begin{equation} \label{NUK} \nu_k(A T B) = \int_{S^* S^n} a(\zeta) b(g_k \zeta) d\nu_k(\zeta, g_k \zeta),
\end{equation}  
although they are not necessarily invariant under $g_k$.
It follows that
$$\int_{\hat{\Gamma}_T} a(x) b(y) d\nu_T(x, y)) = \sum_k \int_{S^* S^n} a(\zeta) b(g_k \zeta) d\nu_k(\zeta, g_k \zeta).
$$
Lift $\{u_j\}$ to $\{\pi^* u_j\}$ on $\bigcup_{j = 1}^{2d} S^n$ and
let $\nu$ be a quantum limit measure of the sequence. If
$A, B$ are microsupported on the $k$th component, then the quantum
limit is $\nu_k$. That is,
$$\int_{\hat{\Gamma}_T} a(x) b(y) d\omega_T(x, y)) = \sum_k \int_{S^* S^n} a_k(\zeta) b_k(g_k \zeta) d\nu_k(\zeta, g_k \zeta).
$$
In Proposition \ref{ISOM}, the  measure $\nu_k$ is defined  on the graph of the lift of $T_{g_k}$ to $S^* M$; it may (and will)
be  identified 
with a measure on $S^* M$.  
Applying Proposition \ref{SA}, we get

\begin{prop} \label{HECKESN} Suppose that $\{u_{j_k}\}$ is a sequence of joint $\Delta - T$ eigenfunctions for which \eqref{rho} 
has a  unique weak * limit  $\mu$ on $S^* S^n$ and \eqref{METG} has a unique weak* limit $\nu$
on $\bigcup_{j = 1}^k S^* S^n$ as in Proposition \ref{Sparam}.
Then $\pi_* \nu = \rho_* \nu = \mu. $ Moreover,   each $\rho_{j,k, g, T}$ in \eqref{rhojgk} 
has a weak limit  $\nu_k$,  and
\begin{equation} \label{munuj} \mu =  \frac{1}{2d} \sum_{j = 1}^{2d} \nu_j =  \;\;  \frac{1}{2d} \sum_{j = 1}^{2d} (g_j)_* \nu_j =  \;\;  \frac{1}{2d} \sum_{j = 1}^{2d} (g_j^{-1})_* \nu_j.  \end{equation}
The measures $\nu_j $ are absolutely continuous with respect to $\mu$ and for each $g_j$, and also
$T_{g_j *} \nu_j$ is absolutely continuous with respect to $\mu$.
Similarly for all powers  $T^n$.
\end{prop}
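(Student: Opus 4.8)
The plan is to reduce Proposition \ref{HECKESN} to the building blocks already assembled in the paper: Proposition \ref{w*L} (so that limits are measures on $SC_T$), Proposition \ref{1}/Corollary \ref{2} (invariance when $C$ is flow-invariant), Proposition \ref{SA}/\ref{LR} (the pushforward-equality principle for real self-adjoint $F$), and Proposition \ref{ISOM} (the isometry case). The key structural remark is the parametrization $\iota_\sharp:\bigsqcup_{j=1}^{2d} S^*S^n \to C_T$ of Lemma \ref{Sparam} and \eqref{iotaSn}: a quantum limit $\nu$ of the normalized functionals \eqref{METG} lives on $C_T$, and pulling back along $\iota_\sharp$ decomposes it as a $2d$-tuple of signed measures $\{\nu_j\}$ on $S^*S^n$, one per branch $\zeta\mapsto(\zeta,g_j\zeta)$. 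Since $\Delta$ and $T$ commute, $C_T$ \emph{is} invariant under $G^t\times G^t$ (each branch is the graph of the isometry lift $g_{j\sharp}$, which commutes with the geodesic flow), so Corollary \ref{2} applies and each $\nu_j$ — hence $\mu=\frac1{2d}\sum(\nu_j)$ after the $\pi$-pushforward — is $G^t$-invariant.

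First I would establish the two pushforward identities $\pi_*\nu=\rho_*\nu=\mu$. Taking $A=B=I$ in \eqref{RHOHECKE} shows that the matrix element $\langle T u_j,u_j\rangle$ in the denominator is exactly what makes $\nu$ a mass-$\le 1$ object; setting $B=I$ (resp. $A=I$) and using that $a=\pi_X^*a'$, $b=\pi_Y^*b'$ on $C_T$ forces $\pi_*\nu=\mu$ and $\rho_*\nu = \mu$ as in the proof of Proposition \ref{ISOM}. Branch by branch this reads $\sum_j \nu_j=\mu$ and $\sum_j (g_j)_*\nu_j=\mu$; the third equality $\sum_j(g_j^{-1})_*\nu_j=\mu$ comes either from the symmetrized form of $T$ (the set $\{g_1,\dots,g_{2d}\}$ is closed under inverses) or from applying the transpose/conjugation symmetry of \S\ref{HECKE} together with reality of the eigenfunctions. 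The equalities \eqref{munuj} then follow by assembling these. For the individual functionals $\rho_{j,g_k}$ in \eqref{rhojgk}, I would note that each is of the type treated in Proposition \ref{ISOM} (matrix elements normalized by $\langle T_{g_k}u_j,u_j\rangle$), so along the given subsequence each has a weak* limit $\nu_k$ of the form \eqref{NUK}; the identification of these $\nu_k$ with the branch measures above is a matter of restricting the test operators $A,B$ to be microsupported near the $k$th branch, exactly as in the displayed computation preceding the proposition.

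Next, absolute continuity. The point is that $\nu_k$ is a \emph{branch} of the limit of $\rho_{j,T}$ relative to $\rho_{j,g_k}$, and both limits project down to $\mu$ under $\pi$. Concretely, for $A\ge 0$ microsupported near the $k$th branch with $\pi_X^*$-lift, $\int \pi_X^* a\, d\nu_k = \lim_k \rho_{j,g_k}(A) $ is controlled, after using the normalization and $\langle T_{g_k}u_j,u_j\rangle\not=0$, by $\lim_k\langle A\phi_{j_k},\phi_{j_k}\rangle/c_k = \mu(a)/c_k$ for a bounded constant sequence $c_k$; passing to the limit of $c_k$ (which exists along the subsequence by hypothesis of unique limits) gives $|\nu_k(a)|\le C\mu(a)$ for all $a\ge 0$, i.e. $\nu_k\ll\mu$ with bounded Radon–Nikodym derivative. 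Applying the same argument to $\rho_{j,g_k^{-1}}$, equivalently to $(\pi_Y)_*$, gives $T_{g_k*}\nu_k\ll\mu$. Finally, the statement for powers $T^n$ follows verbatim: $T^n$ is again a self-adjoint Hecke operator (a renormalized sum of isometries, corresponding to the $n$-fold composed correspondence) whose canonical relation $C_{T^n}$ is $G^t\times G^t$-invariant and admits the analogous branch parametrization, so all of the above applies with $\{g_j\}$ replaced by the words of length $n$.

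The main obstacle I anticipate is \emph{not} any single analytic estimate but the bookkeeping of branches: making the microlocal separation of the $2d$ branches of $C_T$ rigorous where the graphs $\gcal_{T_{g_j}}$ self-intersect (the immersion in Lemma \ref{Sparam} is not injective), so that the decomposition $\nu=\sum\nu_j$ and the identification with the $\rho_{j,g_k}$-limits are well defined. I would handle this by choosing, for $A,B$ supported in a sufficiently small cone, a neighborhood in $T^*(S^n\times S^n)$ meeting only one sheet of $\iota_\sharp$, using a partition of unity subordinate to the finitely many sheets; on each such piece the Hecke operator equals a single $T_{g_k}$ up to an operator with disjoint wavefront relation, reducing everything to Proposition \ref{ISOM}. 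The other place requiring a little care is the boundedness of the normalizing constants $\langle T u_j,u_j\rangle$ and $\langle T_{g_k}u_j,u_j\rangle$ — these are assumed nonzero, and along a subsequence with unique limits they converge, which is exactly what the absolute-continuity argument needs; I would state this explicitly as a running hypothesis rather than prove it.
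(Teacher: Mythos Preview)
Your overall architecture matches the paper's: the identification of $\nu$ with a tuple $\{\nu_j\}$ via the branch parametrization $\iota_\sharp$, the derivation of $\pi_*\nu=\rho_*\nu=\mu$ by specializing $A=I$ or $B=I$, and the resulting identities \eqref{munuj} are exactly what the paper does, and your remarks about self-intersections and the symmetrized index set $\{g_j,g_j^{-1}\}$ are appropriate bookkeeping.

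The one place where your argument diverges from the paper, and where it has a genuine gap, is the absolute continuity step. You claim that for $A\ge 0$ microsupported near the $k$th branch, $\rho_{j,g_k}(A)$ is controlled by $\langle A\phi_{j_k},\phi_{j_k}\rangle/c_k$, giving $|\nu_k(a)|\le C\,\mu(a)$ pointwise in $a\ge 0$. But there is no reason why $|\langle T_{g_k}A u_j,u_j\rangle|$ should be bounded by a constant times $\langle A u_j,u_j\rangle$; positivity of $A$ alone does not yield this, and tracking the normalizing constants does not help. What the paper actually uses is Cauchy--Schwarz together with the \emph{unitarity} of $T_{g_k}$: from
\[
|\langle T_{g_k} A\phi_j,\phi_j\rangle|^2 \le \langle A^*A\phi_j,\phi_j\rangle
\]
one passes to the limit to obtain $\bigl|\int \sigma_A\,d\nu_k\bigr|^2 \le \int |\sigma_A|^2\,d\mu$ for all symbols, which is exactly the statement that $f\mapsto \int f\,d\nu_k$ is a bounded linear functional on $L^2(d\mu)$, hence $d\nu_k = h_k\,d\mu$ with $h_k\in L^2(d\mu)$. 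The companion statement $T_{g_k*}\nu_k\ll\mu$ follows by conjugating $A$ through $T_{g_k}$ (i.e.\ writing $T_{g_k}A = (T_{g_k}A T_{g_k}^{-1})T_{g_k}$ and applying the same inequality to the pseudodifferential operator $T_{g_k}A T_{g_k}^{-1}$), not from a separate argument about $\rho_{j,g_k^{-1}}$. Once you insert this Cauchy--Schwarz/unitarity step in place of your pointwise bound, your proof is complete and coincides with the paper's.
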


\begin{rem} If we use $\hat{\rho}_j$ (cf. \eqref{HATRHO}) instead of  \eqref{RHOHECKE} then the statement should be:   the sequence of eigenvalues $\rho_{j_k}(T)$ of $T$-eigenvalues has a limit $\rho(T)$ and
\begin{equation} \label{munujhat} \mu =  \frac{1}{2d} \sum_{j = 1}^{2d} \nu_j =  \;\;\;  \frac{\rho(T)}{2d} \sum_{j = 1}^{2d} (g_j)_* \nu_j. \end{equation}
\end{rem}

\begin{proof}

First, the statement that $ \mu =  \frac{1}{2d} \sum_{j = 1}^{2d} \nu_j$ follows from the fact that the quantum limit $\nu$
is calculated on $\bigcup_{j = 1}^{2d} S^n$ using the pullbacks of $\{u_{j_k}\}$. If we test against an operator which is also
pulled back we get $2d$ times the quantum limit on the base.  The statement that
$\mu = \frac{1}{2d} \sum_{j = 1}^{2d} (g_j)_* \nu_j$ comes the fact
 that  $\{u_{j_k}\}$ is a sequence of $T$-eigenfunctions.Then the second equality is obvious by taking
limits of $\langle T A u_{j_k}, u_{j_k} \rangle$ for operators pulled back from the base.

Since $T_{g_k}$ is unitary,  
\begin{equation} \label{RIGHTINEQ} |\langle T_{g_k}A  \phi_j, \phi_j \rangle|^2 \leq \langle A^* A \phi_j, \phi_j \rangle   \end{equation}
and that implies $\nu_k << \mu$. Indeed,  \eqref{RIGHTINEQ} implies that along the relevant subsequence,
\begin{equation} \label{leq1} \lim |\langle T_{g_k} A  \phi_j, \phi_j \rangle|^2 = |\int_{S^* M} \sigma_A d\nu_{k} |^2 \leq \int_{S^* M } \sigma_A^2 d\mu.  \end{equation}
Also, if $A^* = A$,
 \begin{equation} \label{LEFTINEQ} |\langle T_{g_k} A  \phi_j, \phi_j \rangle|^2 = |\langle T_{g_k} A T_{g_k}^{-1} T_{g_k}  \phi_j, \phi_j \rangle|^2 \leq \langle (T_{g_k} A
T_{g_k}^{-1})^2 \phi_j, \phi_j \rangle   \end{equation}
and
\begin{equation} \label{ineq2}  \lim |\langle T_{g_k} A  \phi_j, \phi_j \rangle|^2 =
 |\int_{S^* M} \sigma_A d\nu_k |^2 \leq \int_{S^* M } (T_{g_k}\sigma_A)^2 d\mu, \end{equation} 
and therefore $d\nu_k << g_{k *} ^{-1} d\mu$.  Also
 \begin{equation} \label{RIGHTINEQb} |\langle A  T_{g_k}   \phi_j, \phi_j \rangle|^2 = |\langle T_{g_k}  T_{g_k}^{-1}  A T_{g_k}  \phi_j, \phi_j \rangle|^2 \leq \langle (T_{g_k}^{-1} A
T_{g_k})^2 \phi_j, \phi_j \rangle   \end{equation}
and
\begin{equation} \label{ineq3}  \lim |\langle T_{g_k} ^{-1}A  \phi_j, \phi_j \rangle|^2 =
 |\int_{S^* M} \sigma_A d\nu_k |^2 \leq \int_{S^* M } (T_{g_k}^{-1}\sigma_A)^2 d\mu, \end{equation} 
and therefore $d\nu_k << g_{k *} d\mu$.

Here we are using  \eqref{LEFTRIGHT}-\eqref{LRLIMIT}  termwise on each term and the fact that $\phi_j$ is a $T$-eigenfunction.  That is, 
$\nu_j$ is essentially the left limit and $g_{j *} \nu_j$ is the right limit. Inequality \eqref{RIGHTINEQ}
is the inequality for the right functional and \eqref{LEFTINEQ} is the inequality for the left functional.  


\end{proof}

\begin{rem} If instead of $\rho_j$ we use $\hat{\rho}_j$, 
then in order for the weak limit  to exist, we need afortiori that the sequence  $\langle T u_{j_k}, u_{j_k} \rangle$ tends to a limit,
i.e. that the associated sequence of eigenvalues tends to a limit. The rest proceeds as above.
\end{rem}

\begin{cor}  \label{INVCOR} In the notation of Proposition \ref{HECKESN}, 
let $\Lambda_j$ be the singular support of $\nu_j$ and let   $\Lambda$ be the singular support of $\mu$. Then $\Lambda = \bigcup_j \Lambda_j = \bigcup_j g_j(\Lambda_j) . $  \end{cor}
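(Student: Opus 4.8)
The plan is to read ``singular support'' as the support of the singular part in the Lebesgue decomposition with respect to normalized Liouville measure $d\mu_L$ on $S^*S^n$, and to derive everything from the two inputs supplied by Proposition \ref{HECKESN}: the identities
\[
\mu=\tfrac{1}{2d}\sum_{j=1}^{2d}\nu_j=\tfrac{1}{2d}\sum_{j=1}^{2d}(g_j)_*\nu_j ,
\]
together with the absolute-continuity relations $\nu_j\ll\mu$ and $(g_j)_*\nu_j\ll\mu$. Write $\mu=\mu_{ac}+\mu_{sing}$ for the Lebesgue decomposition relative to $d\mu_L$, so that $\Lambda=\supp\mu_{sing}$; here $\mu_{sing}\ge 0$ since $\mu$ is a positive measure.

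First I would establish $\Lambda=\bigcup_j\Lambda_j$. Since $\nu_j\ll\mu$ we may write $\nu_j=f_j\,\mu$ with $f_j\in L^1(d\mu)$; then $\nu_j=f_j\mu_{ac}+f_j\mu_{sing}$ is the Lebesgue decomposition of $\nu_j$ relative to $d\mu_L$ (the first term is $d\mu_L$-absolutely continuous, the second is carried by the same $d\mu_L$-null set as $\mu_{sing}$), so by uniqueness $(\nu_j)_{sing}=f_j\mu_{sing}$ and hence $\Lambda_j=\supp(f_j\mu_{sing})\subseteq\supp\mu_{sing}=\Lambda$; this gives $\bigcup_j\Lambda_j\subseteq\Lambda$ at once. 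For the reverse inclusion, feed $\nu_j=f_j\mu$ into $\mu=\tfrac{1}{2d}\sum_j\nu_j$ and test against bounded measurable functions to get $\tfrac{1}{2d}\sum_{j=1}^{2d}f_j=1$ for $\mu$-a.e.\ point, hence for $\mu_{sing}$-a.e.\ point. Now if $\zeta\in\supp\mu_{sing}$ and $U$ is any neighborhood of $\zeta$, then $\mu_{sing}(U)>0$, so $\tfrac{1}{2d}\sum_j\int_U f_j\,d\mu_{sing}=\mu_{sing}(U)>0$ and therefore $\int_U f_j\,d\mu_{sing}\ne 0$ for some $j$, i.e.\ $U\cap\Lambda_j\ne\emptyset$. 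Thus $\zeta$ lies in the closure of $\bigcup_j\Lambda_j$, which, being a finite union of closed sets, is closed; so $\zeta\in\bigcup_j\Lambda_j$ and $\Lambda=\bigcup_j\Lambda_j$.

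For $\Lambda=\bigcup_j g_j(\Lambda_j)$ I would run the identical argument with $\mu=\tfrac{1}{2d}\sum_j(g_j)_*\nu_j$ and $(g_j)_*\nu_j\ll\mu$, using one further fact: each $g_j$ is the cotangent lift of an isometry of $S^n$, hence a diffeomorphism of $S^*S^n$ preserving $d\mu_L$, so pushforward by $g_j$ takes $d\mu_L$-absolutely continuous measures to $d\mu_L$-absolutely continuous measures and $d\mu_L$-singular measures to $d\mu_L$-singular measures. Consequently $\bigl((g_j)_*\nu_j\bigr)_{sing}=(g_j)_*\bigl((\nu_j)_{sing}\bigr)$, whose support is $g_j\!\left(\supp(\nu_j)_{sing}\right)=g_j(\Lambda_j)$; the same two-sided argument then yields $\Lambda=\bigcup_j g_j(\Lambda_j)$, and, from the third expression $\tfrac1{2d}\sum_j(g_j^{-1})_*\nu_j$, also $\Lambda=\bigcup_j g_j^{-1}(\Lambda_j)$.

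The one point requiring care rather than bookkeeping is the no-cancellation step: the densities $f_j$ are in general signed (or complex), so a priori the $\nu_j$ could cancel on $\supp\mu_{sing}$; this is ruled out precisely because the identity $\tfrac1{2d}\sum_j f_j=1$ is non-degenerate, forcing $\int_U f_j\,d\mu_{sing}\ne 0$ for some $j$ on every $\mu_{sing}$-positive open set $U$. Everything else is routine measure theory, and the absolute-continuity hypotheses $\nu_j\ll\mu$, $(g_j)_*\nu_j\ll\mu$ of Proposition \ref{HECKESN} are exactly what delivers the easy inclusions $\Lambda_j\subseteq\Lambda$ and $g_j(\Lambda_j)\subseteq\Lambda$.
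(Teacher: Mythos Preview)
Your proof is correct and follows essentially the same approach as the paper: both use the absolute continuity $\nu_j\ll\mu$ (resp.\ $(g_j)_*\nu_j\ll\mu$) for the inclusion $\Lambda_j\subseteq\Lambda$ (resp.\ $g_j(\Lambda_j)\subseteq\Lambda$), and the identity $\mu=\tfrac{1}{2d}\sum_j\nu_j=\tfrac{1}{2d}\sum_j(g_j)_*\nu_j$ for the reverse inclusion. Your treatment is more detailed---you make the Lebesgue decomposition explicit, write $\nu_j=f_j\mu$, and use the density identity $\tfrac{1}{2d}\sum_j f_j=1$ to rule out cancellation---whereas the paper simply observes that the singular support of a finite sum is contained in the union of singular supports and remarks that the possibility of cancellation (strict inclusion) is eliminated by the other inclusion.
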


\begin{proof}  Let $\Lambda_j$ be the singular support of $\nu_j$. In view of  \eqref{munuj}, 
$\Lambda \subset \bigcup_{j = 1}^d \Lambda_j$. Strict inclusion is 
apriori possible since cancellations may occur among the terms.
On the other hand, since all $\nu_j << \mu$,
 $\Lambda_j \subset \Lambda$ for all $j$ and therefore
$\bigcup_{j = 1}^{2d} \Lambda_j \subset \Lambda. $
It follows that
\begin{equation} \Lambda = \bigcup_{j = 1}^{2d} \Lambda_j. \end{equation}

It also follows from \eqref{munuj} that  $\Lambda \subset \bigcup_{j = 1}^{2d} g_j \Lambda_j$, since 
 the singular support of  $g_{j*} \nu_j$ is $g_j \Lambda_j$. But also by \eqref{ineq2}, $g_{j*} \nu_j <<  \mu$,
so $g_{j*} \nu_j $ has  singular support in  $\Lambda$ for all j, i.e. $g_j \Lambda_j \subset \Lambda$ and therefore
\begin{equation} \Lambda = \bigcup_{j = 1}^{2d} g_j \Lambda_j. \end{equation}




\end{proof}

\begin{rem} \label{MORE}  If $T \phi_j = \rho_j(T) \phi_j$ then also $T^{\ell} \phi_j = \rho_j(T)^{\ell} \phi_j$. 
The Proposition and Corollary apply equally to all powers of $T$. \end{rem}

For generic $T$, we can obtain some simple restrictions on weak* limits from Corollary \ref{INVCOR} and Remark \ref{MORE}.
For simplicity, we assume that $n = 2$, so that each maximal abelian subgroup is one dimensional. We assume that
each $g_j$ is a topological generator in a maximal abelian subgroup, i.e. that it is a rotation with an irrational angle. We also
assume that the $g_j$ are pairwise independent in the sense that no two generate the same circle of rotations (unless
they are inverses). 

As a sample application, we show that $\Lambda$ cannot be a single closed geodesic $\gamma$, i.e. $d\mu$ cannot
be a constant multiple of the invariant probability measure $\mu_{\gamma}$ on $\gamma$. 
Otherwise, we would have $\nu_j = c_j \mu_{\gamma}$ with $|c_j |\leq 1$  for all $j$. Certainly  $c_{j_0} \not= 0$ for some $j_0$.
Corollary \ref{INVCOR}  then  forces $g_{j_0} \gamma = \gamma$, i.e. $g_{j_0}$ is a generator of $\gamma$. But by the assumption that the $g_j$ are independent,  this forces  the other $c_j = 0$
(i.e. except for $g_{j_0}, g_{j_0}^{-1}$).  By Proposition \ref{HECKESN} we would then have $\mu_{\gamma} = \frac{1}{2d}  (c_{j_0} + \overline{c_{j_0} })  \mu_{\gamma}
= \frac{\rho(T)}{2 d} (c_{j_0} + \overline{c_{j_0} })  \mu_{\gamma}. $ This leads to 
several contradictions. First, $|c_{j_0} | \leq 1$ and $d \geq 1$, so it is impossible
that $1 = \frac{1}{2d}  (c_{j_0} + \overline{c_{j_0} })$.  The equation also   forces $\rho(T) = 1$.
This is  a contradiction when $T$ has a spectral gap \cite{LPS}.  Further contradictions arise if we consider powers of $T$, since almost all terms in the weak
* limit sums must vanish for similar reasons.

\subsection{Hecke operators on hyperbolic quotients}

We now generalize Proposition \ref{HECKESN} to arithmetic hyperbolic quotients. The arguments
are essentially the same, with $\H$ replacing $S^n$ everywhere. The one difference is that we have 
an additional discrete group $\Gamma$ operating which commutes with the Hecke operator so
that it acts on the quotient.  However, we only need to use it to ensure that the relevant cover of $\H$ is
finite sheeted. 

Let $\Gamma$ is a co-compact (or cofinite) discrete subgroup of $G
= PSL(2, \R)$ and let  $\X$ be the corresponding compact (or
finite area) hyperbolic surface. An element $g \in G, g \notin
\Gamma$ is said to be in the commensurator $Comm(\Gamma)$ if
$$\Gamma'(g) : = \Gamma \cap g^{-1} \Gamma g$$
is of finite index in $\Gamma$ and $g^{-1} \Gamma g$. More
precisely,
$$\Gamma = \bigcup_{j = 1}^d \Gamma'(g) \gamma_j, \;\;
(\mbox{disjoint}), $$ or equivalently
$$\Gamma g \Gamma = \bigcup_{j = 1}^d \Gamma \alpha_j, \;\; \mbox{where}\;\;\alpha_j = g \gamma_j. $$
It is also possible to choose $\alpha_j$ so that $\Gamma g \Gamma
=   \bigcup_{j = 1}^d \alpha_j \Gamma. $ We refer to \cite{Sh} for background, or \cite{Z1} for 
 notation of this section.

 Similar to \eqref{DIAGRAM}, we  then have
a diagram of finite (non-Galois) covers:
\begin{equation}\label{DIAGRAM1} \begin{array}{ccccc} & & \Gamma'(g) \backslash \H & &  \\ & & & & \\
& \pi \swarrow & & \searrow \rho & \\ & & & & \\
\Gamma \backslash \H & & \iff & & g^{-1} \Gamma g \backslash \H .
\end{array}
\end{equation}
Here,
$$\pi(\Gamma'(g) z) = \Gamma z, \;\; \rho(\Gamma'(g) z) = g \Gamma
g^{-1} (g \gamma_j z) = g \Gamma z,  $$ where in the definition of
$\rho$ any of the $\gamma_j$ could be used.  The horizontal map is
$z \to g^{-1} z$.
The Radon transform $\rho_* \pi^*$ of the diagram defines a Hecke
operator $T_g: L^2 (\X) \to L^2(\X)$,
$$T_g f(x) = \frac{1}{d} \sum_{j = 1}^d f(g \gamma_j x). $$ Then
$T_{g^{-1}} u(z) = u(g^{-1} z)$
takes $\Gamma$-invariant functions to $g^{-1} \Gamma g$-invariant
functions. 
Hecke operators are self-adjoint and commute with the hyperbolic Laplacian and we can
consider joint eigenfunctions of $\Delta$ and $T_g$,
$$T_g u_j = \rho_j(T_g) u_j. $$
The main difference to the case of $S^n$ is that the covering  \eqref{DIAGRAM1} is not trivial, i.e.
not a disjoint union of $d$ copies of the base. However, below we uniformize so that it  does become
trivial.

The Hecke operator is a kind of averaging operator over orbits of
the Hecke correspondence, which is the multi-valued holomorphic
map
$C_g (z) = \{\alpha_1 z, \dots, \alpha_d z\}. $
Its graph
$\gcal_g = \{(z, \alpha_j z): z \in \X)\}$
is an algebraic curve in  the product $\X \times g^{-1}
\Gamma g \backslash \H. $ Similar to \eqref{DIAGRAMb} we   have a second diagram

\begin{equation}\label{DIAGRAMa} \begin{array}{ccccc} & & \gcal_g \subset \X
\times g^{-1} \Gamma g \backslash \H & &  \\ & & & & \\
& \pi_1 \swarrow & & \searrow \rho_1 & \\ & & & & \\
\Gamma \backslash \H & & \iff & & g^{-1} \Gamma g \backslash \H .
\end{array}
\end{equation}
Here, $\pi_1, \rho_1 $ are the natural projections.
The following is a kind of analogue of Lemma \ref{Sparam}.

\begin{lem}

The map
$$\iota: \Gamma'(g)
\backslash \H \to \Gamma_g$$ defined by  $$\iota(z) = (\pi(z),
\rho(z)), $$ is a local diffeomorphic parametrization, and $\pi_1
\iota = \pi, \rho_1 \iota = \rho. $

\end{lem}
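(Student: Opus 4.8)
The plan is to verify the lemma's three claims — that $\iota$ lands in $\gcal_g$, that $\pi_1\iota=\pi$ and $\rho_1\iota=\rho$, and that $\iota$ is a local diffeomorphism onto $\gcal_g$ — by unwinding the coset definitions in \eqref{DIAGRAM1} and \eqref{DIAGRAMa}, following the pattern of Lemma \ref{Sparam} for $S^n$ but with the connected finite cover $\Gamma'(g)\backslash\H$ in the role played there by the trivial cover $\bigsqcup_{j=1}^{2d} S^n$.

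First I would check that $\iota$ is well defined, i.e. that $\tilde z\mapsto(\pi(\tilde z),\rho(\tilde z))=(\Gamma\tilde z,\ g^{-1}\Gamma g\cdot\tilde z)$ descends from $\H$ to $\Gamma'(g)\backslash\H$. If $\tilde z$ is replaced by $\delta\tilde z$ with $\delta\in\Gamma'(g)=\Gamma\cap g^{-1}\Gamma g$, the first coordinate is unchanged because $\delta\in\Gamma$ and the second because $\delta\in g^{-1}\Gamma g$; this is precisely why $\Gamma'(g)$, and not a larger group, is the correct source. The intertwining identities $\pi_1\iota=\pi$, $\rho_1\iota=\rho$ are then immediate, since $\pi_1,\rho_1$ are the coordinate projections of $\X\times g^{-1}\Gamma g\backslash\H$ restricted to $\gcal_g$ and $\iota(z)=(\pi(z),\rho(z))$ by definition. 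For the fact that the image is all of $\gcal_g$: a point of the graph is $(\Gamma w,\ g^{-1}\Gamma g\cdot\alpha_j w)$ for some $w\in\H$ and index $j$, with $\alpha_j=g\gamma_j$; choosing the representative $z=\gamma_j w$ gives $\iota(\Gamma'(g)z)=(\Gamma\gamma_j w,\ g^{-1}\Gamma g\cdot g\gamma_j w)=(\Gamma w,\ g^{-1}\Gamma g\cdot\alpha_j w)$, so $\iota$ is onto.

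For the local-diffeomorphism statement I would use the factorization $\pi=\pi_1\circ\iota$. The covering $\pi:\Gamma'(g)\backslash\H\to\Gamma\backslash\H$ is a local diffeomorphism because $\Gamma'(g)$ has finite index $d$ in $\Gamma$ and both act properly discontinuously on $\H$; and $\pi_1$, restricted to $\gcal_g$ with its natural structure as an immersed curve (the image of the $d$ local branches $w\mapsto\alpha_j w$ of the Hecke correspondence, each a biholomorphism of $\H$), is likewise a local diffeomorphism onto $\Gamma\backslash\H$. Hence $\iota$, which locally equals $(\pi_1)^{-1}\circ\pi$, is a local diffeomorphism onto $\gcal_g$ — injective away from the finitely many elliptic fixed points of $\Gamma$, over which $\gcal_g$ may have ordinary self-crossings, exactly as for the immersion in Lemma \ref{Sparam}. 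One may dispose of the elliptic points either by using orbifold charts or by first passing to a torsion-free finite-index subgroup of $\Gamma$.

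The step demanding the most care is the coset bookkeeping underlying all of the above: keeping track of which conjugate quotient the $\rho$-coordinate lives in ($g^{-1}\Gamma g\backslash\H$, under the identification $w\mapsto gw$ with $\Gamma\backslash\H$), and of the compatibility of the two decompositions $\Gamma=\bigsqcup_{j=1}^d\Gamma'(g)\gamma_j$ and $\Gamma g\Gamma=\bigsqcup_{j=1}^d\Gamma\alpha_j$ with $\alpha_j=g\gamma_j$, so that the $d$ local sheets of $\iota$ match the $d$ sheets of $\pi_1$ and of $\rho_1$. Once these identifications are fixed, everything reduces to the formal argument above, just as in the spherical case.
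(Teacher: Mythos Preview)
Your proof is correct and follows essentially the same route as the paper's: well-definedness and the intertwining relations are declared obvious there, while you spell them out; for the local-diffeomorphism claim the paper simply notes that $\pi$ and $\rho$ are each local diffeomorphisms, which is the same content as your factorization $\pi=\pi_1\circ\iota$. Your identification of the self-crossings with elliptic fixed points of $\Gamma$ is in fact a sharpening of the paper's ``finite set of $z$'' remark, since $\alpha_k^{-1}\alpha_j=\gamma_k^{-1}\gamma_j\in\Gamma$, so a coincidence $\alpha_j z=\alpha_k z$ forces $z$ to be fixed by a nontrivial element of $\Gamma$.
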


\begin{proof} It is obvious that $\iota$ is well-defined, takes its values in $\Gamma_g$  and
intertwines the projections. Each of the maps $\pi, \rho$ is
itself a local diffeomorphism and therefore $\iota$ also is. The
`fiber' over $(z, \alpha_j z)$ is multiple if and only if there
exist $\alpha_j, \alpha_k, j \not= k$ such that $\alpha_j z =
\alpha_k z$ and this can only happen for a finite set of $z$.

%
%

\end{proof}

We now uniformize and  consider Hecke operators on $\H$ as $\Gamma$-periodic
versions of the Hecke operators on spheres. 
We then obtain a picture similar to that
of $S^n$, where
\begin{equation}\label{DIAGRAMc} \begin{array}{ccccc} & & \bigcup_{j = 1}^d \H \times \{j\}& &  \\ & & & & \\
& \pi \swarrow & & \searrow \rho & \\ & & & & \\
\H & & & & \H,
\end{array}
\end{equation} where $\pi(x, j) = x$ and $\rho (x, j) = g \gamma_j x. $
The Radon transform $\rho_* \pi^*$ of the diagram defines the
Hecke operator $T_g: L^2 (\H) \to L^2(\H)$.
Exactly as  in \eqref{iotaSn} in th case of  $S^n$, 
the map
\begin{equation} \label{HYPANAG} \iota: \bigcup_{j = 1}^d \H \times \{j\} \to \gcal_{T_g} \subset \H \times \H$$ defined by  $$\iota(\hat{x}) =
(\pi(\hat{x}),  \rho(\hat{x})), \;\;\; \mbox{or} \;\;\iota(x, j) =
(x, g \gamma_j x)
 \end{equation}
is an immersion whose image is the graph $\Gamma_T$.
The self-intersection points occur when $\iota(x, j) = \iota(x', j') \implies x =
x'$ and $g_j x = g_{j'} x$ or $g_{j'}^{-1} g_j x = x.$ 
If we take the quotient by $\Gamma$ of diagram (\ref{DIAGRAMb}) we 
get  diagram (\ref{DIAGRAM1}). But it is preferable to regard all of the functionals as defined on pseudo-differential
operators on $\H$ with compact spatial support. Taking the quotient by $\Gamma$ amounts to cutting off 
$\Gamma$-invariant pseudo-differential operators to fundamental domains, but nothing is lost (and some generality
is gained) by using non $\Gamma$-invariant pseudo-differential operators.

\subsection{Quantum limits on $\H$}

On $\H$ we can define the Wigner functionals  for the individual
isometries $g_j$:
\begin{equation} \rho_{j, g_k} (A T B) = \frac{ \langle A T_{g_k} B
u_j, u_j \rangle}{\langle T u_j, u_j \rangle}, \;\; A, B \in
\Psi_c^0(\H). \end{equation}

By Proposition \ref{ISOM}, the limits have the form
$$\int_{S^* \H} a(\zeta) b(g_k \zeta) d\nu_k(\zeta, g_k \zeta).
$$
As with $S^n$ it follows that
$$\int_{\hat{\Gamma}_T} a(x) b(y) d\nu_T(x, y)) = \sum_k \int_{S^* \H} a(\zeta) b(g_k \zeta) d\nu_k(\zeta, g_k \zeta).
$$
Here, $d\nu_k$ is a measure on the $k$th copy of $\H$. The  Hecke
limit measure is the sum
$$\nu = \sum_j \nu_{g_j} \;\;\; \mbox{on} \;\; \bigcup_j S^* \H \times\{j\},
$$
with $\nu_{g_j}$ living on the jth copy $S^* \H \times \{j\}$.

When $\langle T_p u_j, u_j \rangle \not= 0$, we can define the
Wigner functionals by the following normalized matrix elements:
\begin{equation} \rho_{j, g} (A T_p B) = \frac{ \langle A T_p B
u_j, u_j \rangle}{\langle T_p u_j, u_j \rangle}, \;\; A, B \in
\Psi_c^0(\H). \end{equation}

Completely analogously to Proposition \ref{HECKESN}, we have:

\begin{prop} \label{HECKEHYP} Suppose that $\{u_{j_k}\}$ is a sequence of joint $\Delta - T$ eigenfunctions for which \eqref{rho} 
has a  unique weak * limit  $\mu$ on $S^* \H$ and \eqref{METG} has a unique weak* limit $\nu$
on $\bigcup_{j = 1}^{d} S^* \H$.
Then $\pi_* \nu = \rho_* \nu = \omega. $ Moreover,   each $\rho_{j,k, g, T}$ in \eqref{rhojgk} 
has a weak limit  $\nu_k$  and
\begin{equation} \label{munujhyp} \mu = \frac{1}{ d} \sum_{j = 1}^{d} \nu_j =  \;\;\;  \frac{1}{ d} \sum_{j = 1}^{2d} (g_j)_* \nu_j. \end{equation}
The measures $\nu_j $ are absolutely continuous with respect to $\mu$ and for each $g_j$, and also
$T_{g_j *} \nu_j$ is absolutely continuous with respect to $\mu$.
Similarly for all powers  $T^n$. Moreover, $\mu$ is $\Gamma$-invariant. 
\end{prop}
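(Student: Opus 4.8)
The plan is to run the proof of Proposition~\ref{HECKESN} essentially verbatim, replacing $S^n$ by $\H$ and $\Psi^0(S^n)$ by $\Psi^0_c(\H)$, and then to append one short new argument for the $\Gamma$-invariance of $\mu$. The point of uniformizing to the diagram~\eqref{DIAGRAMc} is precisely that it turns the non-Galois cover $\Gamma'(g)\backslash\H\to\Gamma\backslash\H$ of~\eqref{DIAGRAM1} into the \emph{trivial} $d$-sheeted cover $\bigcup_{j=1}^d\H\times\{j\}\to\H$; here finiteness of the number of sheets is exactly the hypothesis $g\in\mathrm{Comm}(\Gamma)$, and this is the only place $\Gamma$ enters the structural part of the argument. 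With this in hand, $T_g=\rho_*\pi^*$ and its canonical relation is parametrized by $\bigcup_{j=1}^d S^*\H$ through the cotangent lift of~\eqref{HYPANAG}, $\iota_\sharp(x,\xi,j)=(x,g\gamma_j x,\xi,D(g\gamma_j)^{*-1}\xi)$, exactly as in~\eqref{iotaSn}. Hence a weak* limit $\nu$ of the normalized functionals~\eqref{METG} is a finite family $\{\nu_j\}_{j=1}^d$ of signed measures on $S^*\H$.

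First I would establish the structural relations. Lifting $u_{j_k}$ to $\pi^*u_{j_k}$ on $\bigcup_j\H\times\{j\}$ and testing against operators microsupported on the $k$th sheet produces the limit $\nu_k$ of the functionals $\rho_{j,g_k}$ of~\eqref{rhojgk}, which by Proposition~\ref{ISOM} has the form $\int_{S^*\H}a(\zeta)\,b(g_k\zeta)\,d\nu_k(\zeta,g_k\zeta)$; testing against operators pulled back from the base gives $\pi_*\nu=\mu$ (a function pulled back to the $d$-sheeted cover integrates against the base limit $d$ times), and cycling $T_g$ off the left and right using that $u_{j_k}$ is a $T_g$-eigenfunction gives $\rho_*\nu=\mu$ together with the two decomposition identities for $\mu$ in terms of the $\nu_j$ and the $(g_j)_*\nu_j$; this is word for word the first part of the proof of Proposition~\ref{HECKESN}, and the equality $\pi_*\nu=\rho_*\nu=\mu$ is the content of Propositions~\ref{SA} and~\ref{LR}. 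The absolute continuity statements then follow from the unitarity of each $T_{g_k}$: the Cauchy--Schwarz inequalities~\eqref{RIGHTINEQ}, \eqref{LEFTINEQ}, \eqref{RIGHTINEQb} together with Egorov's theorem for the isometry $T_{g_k}$ (equivalently the relation~\eqref{LRLIMIT} applied termwise) give, in the limit, $\bigl|\int\sigma_A\,d\nu_k\bigr|^2\le\int\sigma_A^2\,d\mu$ and $\bigl|\int\sigma_A\,d\nu_k\bigr|^2\le\int(T_{g_k}\sigma_A)^2\,d\mu$ as in~\eqref{leq1}, \eqref{ineq2}, \eqref{ineq3}, whence $\nu_k\ll\mu$ and $(g_k)_*\nu_k\ll\mu$; the assertion for powers $T^n$ is immediate since $u_{j_k}$ is simultaneously a $T^n$-eigenfunction (Remark~\ref{MORE}).

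The one genuinely new ingredient is the $\Gamma$-invariance of $\mu$, and it is short: each $u_{j_k}$ is $\Gamma$-invariant, being the lift of a $\Delta$-eigenfunction on $\X$. Fix $\gamma\in\Gamma$, let $T_\gamma$ denote translation by $\gamma$, and let $A\in\Psi^0_c(\H)$. Then $T_\gamma A T_\gamma^{-1}\in\Psi^0_c(\H)$ with principal symbol $\gamma_\sharp^*\sigma_A$, and $\langle T_\gamma A T_\gamma^{-1}u_{j_k},u_{j_k}\rangle=\langle A T_\gamma^{-1}u_{j_k},T_\gamma^{-1}u_{j_k}\rangle=\langle A u_{j_k},u_{j_k}\rangle$ because $T_\gamma^{-1}u_{j_k}=u_{j_k}$. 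Letting $k\to\infty$ gives $\mu(\gamma_\sharp^*\sigma_A)=\mu(\sigma_A)$, i.e. $\mu$ is invariant under the cotangent lift $\gamma_\sharp$ of every $\gamma\in\Gamma$, as claimed.

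I expect the main obstacle to be bookkeeping rather than a real difficulty: one must check that carrying out the whole argument on $\H$ with compactly supported pseudodifferential operators (rather than on the compact or finite-area quotient with $\Gamma$-invariant operators) loses nothing --- in the cofinite case this requires localizing all matrix elements in a fundamental domain and verifying the estimates survive near the cusps --- and that the trivialization~\eqref{DIAGRAMc} correctly handles the self-intersection locus of the Hecke correspondence, which is a lower-dimensional set of points $z$ and therefore contributes nothing to the limit measures. Once these routine points are dispatched, the proof is identical to that of Proposition~\ref{HECKESN}, with the $\Gamma$-invariance added as above.
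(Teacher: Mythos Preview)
Your proposal is correct and follows essentially the same approach as the paper, which simply states that the proof is identical to that of Proposition~\ref{HECKESN} (the infinite volume and compactly supported $\Psi^0_c(\H)$ causing no significant change) with only the $\Gamma$-invariance of $\mu$ being new. Your $\Gamma$-invariance argument via $T_\gamma^{-1}u_{j_k}=u_{j_k}$ is a direct variant of the paper's remark that this is ``obvious from the fact that $T$ commutes with $\Gamma$'' (cf.\ the subsequent Lemma, which uses $T_g L_\gamma = L_\gamma T_g$ to prove the analogous invariance for $\sum_j\nu_j$).
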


The proof is essentially the same as for Proposition \ref{HECKESN} and is omitted. The fact that $\H$ is of infinite volume
and the pseudo-differential operators are spatially compactly supported does not change the proof in any significant way. The only
new statement is that $\mu$ is $\Gamma$-invariant, which is obvious from the fact that $T$ commutes with $\Gamma$.

\begin{cor}  \label{INVCORH} In the notation of Proposition \ref{HECKESN}, 
let $\Lambda_j$ be the singular support of $\nu_j$ and let   $\Lambda$ be the singular support of $\mu$. Then $\Lambda = \bigcup_j \Lambda_j = \bigcup_j g_j(\Lambda_j) . $  \end{cor}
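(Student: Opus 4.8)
\textbf{Proof proposal for Corollary \ref{INVCORH}.}

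The plan is to transport the proof of Corollary \ref{INVCOR} verbatim to the hyperbolic setting, with $\H$ replacing $S^n$ and $\Psi^0_c(\H)$ replacing $\Psi^0(S^n)$, and to observe that none of the steps used compactness of the base in an essential way. First I would recall from Proposition \ref{HECKEHYP} the two decompositions $\mu = \frac{1}{d}\sum_{j=1}^{d}\nu_j = \frac{1}{d}\sum_{j=1}^{d}(g_j)_*\nu_j$, together with the absolute continuity statements $\nu_j \ll \mu$ and $(g_j)_*\nu_j \ll \mu$ for every $j$. These are exactly the ingredients that Corollary \ref{INVCOR} used, so the combinatorial argument on singular supports goes through unchanged.

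Concretely, writing $\Lambda_j$ for the singular support of $\nu_j$ and $\Lambda$ for the singular support of $\mu$: from $\mu = \frac{1}{d}\sum_j \nu_j$ we get $\Lambda \subset \bigcup_j \Lambda_j$ (strict inclusion a priori possible because of cancellation among the terms), while from $\nu_j \ll \mu$ we get $\Lambda_j \subset \Lambda$ for each $j$, hence $\bigcup_j \Lambda_j \subset \Lambda$; together these give $\Lambda = \bigcup_j \Lambda_j$. Similarly, $\mu = \frac{1}{d}\sum_j (g_j)_*\nu_j$ gives $\Lambda \subset \bigcup_j g_j(\Lambda_j)$ since the singular support of $(g_j)_*\nu_j$ is $g_j(\Lambda_j)$, while $(g_j)_*\nu_j \ll \mu$ gives $g_j(\Lambda_j) \subset \Lambda$ for each $j$; together $\Lambda = \bigcup_j g_j(\Lambda_j)$.

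The one point requiring a remark — and the only place I expect any friction — is that the measures here are a priori defined as functionals on compactly supported pseudo-differential operators on $\H$, so the notions of ``singular support'' and ``absolute continuity'' must be understood locally: $\Lambda$ is the complement in $S^*\H$ of the largest open set on which $\mu$ has smooth density, and $\nu_j \ll \mu$ means absolute continuity of the restrictions to each relatively compact open set. Because the covering \eqref{DIAGRAMc} is finite-sheeted and the maps $\pi, \rho$ are local diffeomorphisms with $\rho = g\gamma_j$ on the $j$th sheet, pushforward by each $g_j$ is a proper local diffeomorphism of $S^*\H$, so it carries local singular supports to local singular supports and preserves local absolute continuity; the identities above then hold as equalities of closed subsets of $S^*\H$. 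The $\Gamma$-invariance of $\mu$ recorded in Proposition \ref{HECKEHYP} guarantees that $\Lambda$, and hence the whole picture, is $\Gamma$-invariant, so nothing is lost by working on $\H$ rather than on $\X$. With these local conventions in force the argument is purely formal, exactly as in Corollary \ref{INVCOR}.
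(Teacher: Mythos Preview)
Your proposal is correct and is exactly the approach the paper takes: it simply states that the proof is the same as for Corollary \ref{INVCOR}, and your write-up reproduces that argument step by step, with the harmless extra remark about interpreting singular support and absolute continuity locally on the non-compact space $\H$.
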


Again the proof is the same as for Corollary \ref{INVCOR}.

\subsection{Quantum limits on $S^* \Gamma'(g) \backslash \H$}

 Instead of viewing $\nu$ as a measure on $\bigcup_j S^* \H \times \{j\}$ as in Proposition \ref{HECKEHYP}, we may
take the quotient by $\Gamma$ and view it as a measure on $S^* (\Gamma'(g) \backslash \H) = \Gamma'(g) \backslash G. $
Note that the quotient by $\Gamma$  glues together the $d$ disjoint copies of $\H$ and then takes
the quotient of the resulting $\H$  by $\Gamma'(g)$. To explain the $\Gamma$-invariance properties we prove

\begin{lem} $\sum_j \nu_j$ is a $\Gamma$-invariant measure on $S^* \H \simeq G$.
 \end{lem}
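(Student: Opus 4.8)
The plan is to trace carefully how the individual measures $\nu_j$ are defined and to exploit the fact that the Hecke operator $T_g$ commutes with the $\Gamma$-action on $\H$. First I would recall that $\nu_j$ arises as a weak* limit of the functionals $\rho_{j,g_j}$ tested against compactly supported pseudodifferential operators $A, B \in \Psi_c^0(\H)$, where the relevant matrix elements are $\langle A T_{g_j} B u_k, u_k\rangle$ for joint $\Delta$–$T$ eigenfunctions $u_k$, lifted to $\H$. The point is that each $u_k$ is $\Gamma$-invariant, so for $\gamma \in \Gamma$ the translated state $T_\gamma u_k = u_k$, and hence $\langle A T_{g_j} B u_k, u_k\rangle = \langle T_\gamma^{-1} A T_\gamma \, T_\gamma^{-1} T_{g_j} T_\gamma \, T_\gamma^{-1} B T_\gamma \, u_k, u_k\rangle$. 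Since $T_\gamma$ is an isometry, $T_\gamma^{-1} A T_\gamma$ is again a compactly supported pseudodifferential operator with symbol $\gamma^* \sigma_A$ (shifted support), and similarly for $B$.

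Next I would analyze the conjugate $T_\gamma^{-1} T_{g_j} T_\gamma$. Since $T_g = \rho_* \pi^*$ corresponds to the coset decomposition $\Gamma g \Gamma = \bigcup_j \Gamma \alpha_j$ with $\alpha_j = g\gamma_j$, conjugation by an element of $\Gamma$ permutes the branches: $\gamma^{-1} \alpha_j \gamma$ runs over another complete set of representatives of $\Gamma \backslash \Gamma g \Gamma$, so $T_\gamma^{-1} T_g T_\gamma = T_g$ on $\Gamma$-invariant functions, and on the level of the individual branch isometries the index $j$ gets permuted (and the representative changed by a left $\Gamma$-factor, which is invisible on $\Gamma$-invariant test functions). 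Passing to weak* limits and using Proposition \ref{ISOM} to write $\nu_j$ as a measure on the graph of the lift of $T_{g_j}$, I would conclude that $\gamma$ acts on the collection $\{\nu_j\}$ by simultaneously translating each $\nu_j$ by $\gamma_\sharp$ and permuting the indices. Summing over $j$, the permutation washes out and one is left with $\gamma_\sharp^* \left(\sum_j \nu_j\right) = \sum_j \nu_j$, i.e. $\Gamma$-invariance of the total measure on $S^*\H \simeq G$.

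The main obstacle I expect is bookkeeping with the branch representatives: conjugating $\alpha_j = g\gamma_j$ by $\gamma \in \Gamma$ gives $\gamma^{-1} g \gamma_j \gamma$, which need not be of the form $g\gamma_k$ on the nose — it equals $\delta \cdot g \gamma_k$ for some $\delta \in \Gamma$ and some index $k = \sigma(j)$ depending on $\gamma$. One must check that the extra left factor $\delta$ does not affect the limit measure, which follows because $u_k$ and all the test operators descend to $\Gamma \backslash \H$ (or, in the compactly supported picture, because the matrix element only sees $g\gamma_k$ modulo left $\Gamma$-translation of the output, which is killed by $\Gamma$-invariance of $u_k$). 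A secondary point is to make sure the weak* limits $\nu_j$ along the chosen subsequence are genuinely permuted — not merely that the sums agree — but since the permutation $\sigma = \sigma_\gamma$ is a fixed combinatorial object (independent of $k$), this is automatic once the eigenfunctions have unique limits along the subsequence, which is the standing hypothesis. With these points dispatched, summing over $j$ gives the claim.
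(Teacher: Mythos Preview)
Your approach is correct and in fact more careful than the paper's. The paper's proof is a three-line computation: using that $u_j$ is $\Gamma$-invariant (so $L_\gamma u_j = u_j$) and the commutation $L_\gamma^* T_g L_\gamma = T_g$, it writes
\[
\langle A T_g B u_j, u_j \rangle = \langle (L_\gamma^* A L_\gamma)\, T_g\, (L_\gamma^* B L_\gamma)\, u_j, u_j \rangle
\]
for compactly supported $A,B \in \Psi_c^0(\H)$, and then takes the limit to conclude $\gamma_* \nu = \nu$ for the total measure $\nu = \sum_j \nu_j$. It never looks at the individual branches $T_{\alpha_j}$.

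Your route differs in that you work branch by branch: you observe that conjugating $T_{\alpha_j}$ by $T_\gamma$ gives $T_{\delta \alpha_{\sigma(j)}}$ for some $\delta \in \Gamma$ and a permutation $\sigma = \sigma_\gamma$ of the coset representatives, and that the extra $\delta$ is invisible because the eigenfunctions are $\Gamma$-invariant. Summing over $j$ then kills the permutation. This is exactly the content hiding inside the paper's bald assertion that $T_g$ commutes with $L_\gamma$ (which, as you implicitly note, is not literally true on all of $L^2(\H)$ but only modulo such a branch permutation and left $\Gamma$-factors). So your argument is really an unpacking of the paper's, with the bonus that you extract the finer statement that $\gamma$ acts on the tuple $(\nu_1,\dots,\nu_d)$ by translating and permuting, not merely that the sum is fixed. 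The bookkeeping obstacle you flag (the left factor $\delta$) is genuine but you dispatch it correctly.
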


 \begin{proof}

Let $A, B$ be compactly supported pseudo-differential operators on
$\H$. Since $T_g L_{\gamma} = L_{\gamma} T_g$,
$$\begin{array}{lll} \langle A T_g B u_j, u_j \rangle & = &\langle A T_g B L_{\gamma} u_j, L_{\gamma} u_j \rangle \\ && \\
& = & \langle L_{\gamma}^* A T_g B L_{\gamma} u_j, u_j \rangle \\ && \\
& = & \langle L_{\gamma}^* A L_{\gamma} T_{g}
L_{\gamma}^* B L_{\gamma} u_j, u_j \rangle. \end{array}$$

Taking the limit gives $\gamma_* \nu = \nu.$


 \end{proof}




\begin{thebibliography}{hhhh}

\bibitem[BB]{BB} V. Babich and V. M.; Buldyrev,  Short-wavelength diffraction theory. Asymptotic methods.  Springer Series on Wave Phenomena, 4. Springer-Verlag, Berlin, 1991.

\bibitem[BL]{BL} J.  Bourgain and E.  Lindenstrauss, Entropy of quantum limits, Comm. Math. Phys. 233 (2003), 153--171.


\bibitem[Bou]{Bou} L. Boutet de Monvel,
Convergence dans le domaine complexe des s\'eries de fonctions
propres.  C. R. Acad.\ Sci.\ Paris S\'er. A-B 287 (1978), no.\ 13,
A855--A856.


\bibitem[CdV]{CdV} Y. Colin de Verdi\`ere,
Quasi-modes sur les vari\'rt\'es Riemanniennes. Invent. Math. 43
(1977), no. 1, 15--52.

\bibitem[CR]{CR} M. Combescure and D. Robert, {\it Coherent states and applications in mathematical physics.} Theoretical and Mathematical Physics. Springer, Dordrecht, 2012.


\bibitem[Co]{Co}  A. Connes,  Geometry from the spectral point of view. Lett. Math. Phys. 34 (1995), no. 3, 203-238.

\bibitem[Co2]{Co2} A. Connes, The action functional in noncommutative geometry, Comm. Math. Phys. 117 (1988), 673-683.

\bibitem[DZ]{DZ} S. Dyatlov and M. Zworski, Quantum ergodicity for restrictions to hypersurfaces,
Nonlinearity 26 (2013), no. 1, 35-52  (arXiv:1204.0284).

\bibitem[E]{E} M.  Eichler,
Modular correspondences and their representations. J. Indian Math.
Soc. (N.S.) 20 (1956), 163--206.

\bibitem[GL]{GL} F. Golse and E. Leichtnam,  Applications of Connes' geodesic flow to
trace formulae in noncommutative geometry, J. Funct. Anal. 160 (1998), 408-436. 

\bibitem[GS]{GS}  A. Greenleaf and A.  Seeger,
Fourier integral operators with fold singularities. 
J. Reine Angew. Math. 455 (1994), 35–56.

\bibitem[Gu]{Gu} V.  Guillemin,  Some classical theorems in spectral theory revisited. Seminar on Singularities of Solutions of Linear Partial Differential Equations (Inst. Adv. Study, Princeton, N.J., 1977/78), pp. 219–259, Ann. of Math. Stud., 91, Princeton Univ. Press, Princeton, N.J., 1979.


\bibitem[GS1]{GS1} V. Guillemin and M. Stenzel, Grauert tubes and the
homogeneous Monge-Amp\`ere equation. J. Differential Geom. 34
(1991), no.\ 2, 561--570.

\bibitem[GuSt]{GuSt}  V. Guillemin and S.  Sternberg, 
Some problems in integral geometry and some related problems in microlocal analysis. 
Amer. J. Math. 101 (1979), no. 4, 915–955.
.
\bibitem[GU]{GU} V.  Guillemin and A.  Uribe,   Reduction and the trace formula. J. Differential Geom. 32 (1990), no. 2, 315–34. 

\bibitem[HZ]{HZ} A. Hassell and S. Zelditch,  Ergodicity of boundary values of eigenfunctions, Comm.Math.Phys. Vol-
ume 248, Number 1 (2004) 119 - 168.


\bibitem[HoI-IV]{HoI-IV}  L. H\"ormander, {\it Theory of
Linear Partial Differential Operators I-IV}, Springer-Verlag, New
York (1985).

\bibitem[JSS]{JSS} D. Jakobson, Y. Safarov and A. Strohmaier, The semi-classical theory of discontinuous systems
and ray-splitting billiards (preprint, 2013).

\bibitem[KV]{KV} M. Karasev and Y. Vorobjev, 
Integral representations over isotropic submanifolds and equations of zero curvature. 
Adv. Math. 135 (1998), no. 2, 220- 286.

\bibitem[K]{K}  Yu. A. Kordyukov,
Classical and quantum ergodicity on orbifolds. 
Russ. J. Math. Phys. 19 (2012), no. 3, 307-316.

\bibitem[L]{L} E. Lindenstrauss,
Invariant measures and arithmetic quantum unique ergodicity. Ann.
of Math. (2) 163 (2006), no. 1, 165--219.

\bibitem[LPS]{LPS}  A. Lubotzky, R. S. Phillips and P.  Sarnak, Hecke operators and distributing points on the sphere. I. Frontiers of the mathematical sciences: 1985 (New York, 1985). Comm. Pure Appl. Math. 39 (1986), no. S, suppl., S149-S186.

\bibitem[MT]{MT} R. B.  Melrose and M. E. Taylor, 
Near peak scattering and the corrected Kirchhoff approximation for a convex obstacle. 
Adv. in Math. 55 (1985), no. 3, 242-31.

\bibitem[PU]{PU}  T. Paul and A.  Uribe, On the pointwise behavior of semi-classical measures. Comm. Math. Phys. 175 (1996), no. 2, 229 - 258.

\bibitem[R]{R}  J. V. Ralston, On the construction of quasimodes associated with stable periodic orbits, Comm. Math. Phys. 51 (1976), 219-242.

\bibitem[Rez]{Rez} A. Reznikov, Norms of Geodesic  Restrictions of Eigenfunctions
on hyperbolic surfaces and representation theory, (arXiv:math/0403437).



\bibitem[RS]{RS} Z.  Rudnick and P.  Sarnak, The behaviour of eigenstates of arithmetic hyperbolic manifolds, Comm. Math. Phys. 161 (1994),
195--213.

\bibitem[ST]{ST} R. Schrader and M. E. Taylor,  Semiclassical asymptotics, gauge fields, and quantum chaos. J. Funct. Anal. 83 (1989), no. 2, 258–316. 






\bibitem[Sh]{Sh} G. Shimura, {\it Introduction to the arithmetic theory of automorphic functions}. Reprint of the 1971 original.
 Publications of the Mathematical Society of Japan, 11. Kan Memorial Lectures, 1. Princeton University Press, Princeton, NJ, 1994.


\bibitem[St]{St} M. Stenzel, to appear. 

\bibitem[Ta]{Ta} D.  Tataru, 
On the regularity of boundary traces for the wave equation. 
Ann. Scuola Norm. Sup. Pisa Cl. Sci. (4) 26 (1998), no. 1, 185-206.

 \bibitem[TZ]{TZ} J. A. Toth and S. Zelditch, Quantum ergodic
 restriction theorems, I: interior hypersurfaces  in analytic domains.  Annales Henri Poincar\'e: Volume 13, Issue 4 (2012), p. 599-670.	.
(arXiv:1005.1636).

\bibitem[TZ2]{TZ2}  A. Toth and S. Zelditch,  Quantum ergodic restriction theorems, II: manifolds without boundary,
 Geometric and Functional Analysis: Volume 23, Issue 2 (2013), Page 715-775
(arXiv:1104.4531).


\bibitem[W]{W}  A. Weinstein, 
On Maslov's quantization condition. Fourier integral operators and partial differential equations (Colloq. Internat., Univ. Nice, Nice, 1974), pp. 341–372. Lecture Notes in Math., Vol. 459, Springer, Berlin, 1975. 




\bibitem[Z1]{Z1} S. Zelditch,
A Szego limit formula for conjugates of $\psi{\rm DO}$s by Hecke
operators. J. Funct. Anal. 77 (1988), no. 1, 198--210.


\bibitem[Z2]{Z2}  S. Zelditch,
Kuznecov sum formulae and Szego limit formulae on manifolds. Comm.
Partial Differential Equations 17 (1992), no. 1-2, 221--260.

\bibitem[Z3]{Z3}  S. Zelditch,  Recent developments in mathematical quantum chaos. {\it  Current developments in mathematics, 2009}, 115-204, Int. Press, Somerville, MA, 2010. 

\bibitem[Z4]{Z4}  S. Zelditch,  Quantum ergodicity of C* dynamical systems. Comm. Math. Phys. 177 (1996), no. 2, 507-528.

\bibitem[Z5]{Z5}  S. Zelditch, 
Quantum transition amplitudes for ergodic and for completely integrable systems. 
J. Funct. Anal. 94 (1990), no. 2, 415-436.

\bibitem[Z6]{Z6} S. Zelditch,  Index and dynamics of quantized contact transformations,
 Annales de l'Institut Fourier 47 (1997), 305 - 363.

\bibitem[Z7]{Z7}  S. Zelditch,  Quantum maps and automorphisms. The breadth of symplectic and Poisson geometry, 623–654, Progr. Math., 232, Birkh\"auser Boston, Boston, MA, 2005. 

\bibitem[Z8]{Z8} S. Zelditch, Pluri-potential  theory on Grauert tubes II (preprint, 2013).

\bibitem[Z9]{Z9} S. Zelditch,  On a "quantum chaos'' theorem of R. Schrader and M. Taylor. J. Funct. Anal. 109 (1992), no. 1, 1–21.

\bibitem[Z10]{Z10} S. Zelditch,
Ergodicity and intersections of nodal sets and geodesics on real analytic surfaces, to appear in Jour. Diff. Geom. ( arXiv:1210.0834 ).

\bibitem[Zw]{Zw} M. Zworski, {\it Semiclassical analysis. } Graduate Studies in Mathematics, 138. American Mathematical Society, Providence, RI, 2012.

\end{thebibliography}
 \end{document}